\numberwithin{equation}{section}
\theoremstyle{plain}
\newtheorem{theorem}{Theorem}[section]
\newtheorem{proposition}[theorem]{Proposition}
\newtheorem{lemma}[theorem]{Lemma}
\theoremstyle{remark}
\newtheorem*{ack}{Acknowledgement}
\theoremstyle{definition}
\newcommand{\LL}{\mathcal{L}}
\newcommand{\MM}{\mathcal{M}}
\newcommand{\CC}{\mathcal{C}}
\newcommand{\R}{\mathbb{R}}
\newcommand{\RP}{\mathbb{RP}^1}
\newcommand{\N}{\mathbb{N}}
\newcommand{\iii}{\mathtt{i}}
\newcommand{\jjj}{\mathtt{j}}
\newcommand{\kkk}{\mathtt{k}}
\newcommand{\fii}{\varphi}
\newcommand{\roo}{\varrho}
\newcommand{\A}{\mathsf{A}}
\renewcommand{\ge}{\geqslant}
\renewcommand{\le}{\leqslant}
\renewcommand{\geq}{\geqslant}
\renewcommand{\leq}{\leqslant}
\DeclareMathOperator{\udimm}{\overline{dim}_M}
\DeclareMathOperator{\dimh}{dim_H}
\DeclareMathOperator{\dimaff}{dim_{aff}}
\DeclareMathOperator{\linspan}{span}
\DeclareMathOperator{\diam}{diam}
\DeclareMathOperator{\proj}{proj}
\DeclareMathOperator{\rank}{rank}
\DeclareMathOperator{\im}{im}
\begin{document}

\title{Non-invertible planar self-affine sets}

\author{Antti K\"aenm\"aki}
\address[Antti K\"aenm\"aki]
        {Research Unit of Mathematical Sciences \\ 
         P.O.\ Box 8000 \\ 
         FI-90014 University of Oulu \\ 
         Finland}
\email{antti.kaenmaki@oulu.fi}

\author{Petteri Nissinen}
\address[Petteri Nissinen]
        {Department of Physics and Mathematics \\
         University of Eastern Finland \\
         P.O.\ Box 111 \\
         FI-80101 Joensuu \\
         Finland}
\email{pettern@student.uef.fi}

\subjclass[2000]{Primary 28A80; Secondary 37C45, 37D35.}
\keywords{Self-affine set, Hausdorff dimension}
\date{\today}

\begin{abstract}
  We compare the dimension of a non-invertible self-affine set to the dimension of the respective invertible self-affine set. In particular, for generic planar self-affine sets, we show that the dimensions coincide when they are large and differ when they are small. Our study relies on thermodynamic formalism where, for dominated and irreducible matrices, we completely characterize the behavior of the pressures.
\end{abstract}

\maketitle

\section{Introduction}

Let $J$ be a finite set and $(A_i+v_i)_{i \in J}$ a tuple of contractive affine self-maps on $\R^2$, where we have written $A+v$ to denote the affine map $x \mapsto Ax+v$ defined on $\R^2$ for all matrices $A \in M_2(\R)$ and translation vectors $v \in \R^2$. If the affine maps $A_i+v_i$ do not have a common fixed point, then we call such a tuple an \emph{affine iterated function system}. We also write $f_i = A_i+v_i$ for all $i \in J$ and note that the associated tuple of matrices $(A_i)_{i \in J}$ is an element of $M_2(\R)^J$.

A classical result of Hutchinson \cite{Hutchinson1981} shows that for each affine iterated function system $(f_i)_{i \in J}$ there exists a unique non-empty compact set $X' \subset \R^2$, called the \emph{self-affine set}, such that
\begin{equation} \label{eq:self-affine-set-def}
  X' = \bigcup_{i \in J} f_i(X').
\end{equation}
In this article, if $I = \{i \in J : A_i \text{ is invertible}\}$ is non-empty, then the self-affine set $X \subset X'$ associated to $(f_i)_{i \in I}$ is called \emph{invertible}, and if $J \setminus I$ is non-empty, then the self-affine set $X'$ associated to $(f_i)_{i \in J}$ is called \emph{non-invertible}. B\'ar\'any, Hochman, and Rapaport \cite{BHR} and Hochman and Rapaport \cite{HochmanRapaport2021} have recently shown that the Hausdorff dimension reaches a natural upper bound, the affinity dimension, on a large deterministic class of invertible self-affine sets.

In our main result, Theorem \ref{thm:main} below, part (1) shows that generically under a separation condition the dimensions of $X'$ and $X$ agree when they are at least $1$. Furthermore, if the dimension of $X$ is strictly less than $1$, then part (2) demonstrates that generically the dimensions of $X'$ and $X$ are distinct. Regarding part (3), let us first recall that Marstrand's projection theorem \cite{Marstrand1954} gives $\dimh(\proj_{V}(X')) = \min\{1,\dimh(X')\}$ for Lebesgue almost all $V \in \RP$. Although the equality holds for generic $V$, it is often difficult to say whether a particular $V$ satisfies it. The purpose of part (3) is to verify that the orthogonal complement of the kernel of one of the rank one matrices is such a direction.

The precise definitions of the assumptions used in the theorem will be given in coming sections.

\begin{theorem} \label{thm:main}
  Suppose that $X'$ and $X$ are the planar self-affine sets associated to affine iterated function systems $(A_i+v_i)_{i \in J}$ and $(A_i+v_i)_{i \in I}$ such that $A_i \in GL_2(\R)$ for all $i \in I \subset J$, respectively.
  \begin{enumerate}
    \item If $(A_i)_{i \in I}$ is strictly affine and strongly irreducible such that $\dimaff((A_i)_{i \in I}) \ge 1$ and $X$ satisfies the strong open set condition, then
    \begin{align*}
      \udimm(X') &= \dimh(X), \\ 
      \dimh(\proj_V(X')) &= 1
    \end{align*}
    for all $V \in \RP$.
    \item If $(A_i)_{i \in J}$ is dominated or irreducible such that $\max_{i \in J} \|A_i\| < \frac12$, contains a rank one matrix, and $\dimaff((A_i)_{i \in I}) < 1$, then
    \begin{equation*}
      \dimh(X_{\mathsf{v}}') > \udimm(X_{\mathsf{v}})
    \end{equation*}
    for $\LL^{2\# J}$-almost all translation vectors $\mathsf{v} = (v_i)_{i \in J} \in (\R^2)^{\# J}$.
    \item If $(A_i)_{i \in J}$ contains a rank one matrix, $(A_i)_{i \in I}$ is strictly affine and strongly irreducible such that $\dimaff((A_i)_{i \in I}) < 1$, and $X$ satisfies the strong open set condition, then there exists a rank one matrix $A$ in $\A$ such that
    \begin{equation*}
      \dimh(X') = \dimh(\proj_{\ker(A)^\bot}(X')) \le 1.
    \end{equation*}
  \end{enumerate}
\end{theorem}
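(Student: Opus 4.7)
The plan is to decompose $X'$ according to the first occurrence of a non-invertible symbol in its coding, reduce each non-invertible piece to a one-dimensional projection, and then pick the projection direction that realises the full Hausdorff dimension. To this end I first observe that $X \subset X'$: the Hutchinson operator $Y \mapsto \bigcup_{i \in I} f_i(Y)$ has unique compact fixed point $X$, and it maps $X'$ into itself because $X' = \bigcup_{i \in J} f_i(X') \supset \bigcup_{i \in I} f_i(X')$, so its iterates on $X'$ descend to $X$. Splitting $J^\infty$ according to the first index carrying a letter in $J \setminus I$ (and placing the all-invertible codings into $X$) then gives
\[
X' = X \cup \bigcup_{n \ge 0}\,\bigcup_{\mathbf{i} \in I^n}\,\bigcup_{j \in J \setminus I} f_\mathbf{i}\bigl(f_j(X')\bigr).
\]
The union is countable and, since $A_\mathbf{i}$ is invertible for every $\mathbf{i} \in I^*$, each $f_\mathbf{i}$ is bi-Lipschitz; countable stability of Hausdorff dimension therefore yields
\[
\dimh(X') = \max\Bigl\{\dimh(X),\ \max_{j \in J \setminus I}\dimh\bigl(f_j(X')\bigr)\Bigr\}.
\]

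The second step converts each of the non-invertible contributions into a projection dimension. For $j \in J \setminus I$ the rank one matrix $A_j$ factors as $A_j = \phi_j \circ \proj_{\ker(A_j)^\bot}$, where $\phi_j$ is the restriction of $A_j$ to $\ker(A_j)^\bot$, a linear isomorphism onto $\im(A_j)$. Consequently $f_j(X') = \phi_j\bigl(\proj_{\ker(A_j)^\bot}(X')\bigr) + v_j$ is an affine bi-Lipschitz image of $\proj_{\ker(A_j)^\bot}(X')$, and since the latter lives in a line,
\[
\dimh\bigl(f_j(X')\bigr) = \dimh\bigl(\proj_{\ker(A_j)^\bot}(X')\bigr) \le 1.
\]

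Finally, under strict affinity, strong irreducibility, SOSC and $\dimaff((A_i)_{i \in I}) < 1$ I would invoke the small-dimension projection theorem for the invertible attractor, namely $\dimh(\proj_V(X)) = \dimh(X)$ for every $V \in \RP$. Choosing $j^* \in J \setminus I$ that maximises $\dimh(\proj_{\ker(A_j)^\bot}(X'))$ and setting $A = A_{j^*} \in \A$, the inclusion $X \subset X'$ gives
\[
\dimh\bigl(\proj_{\ker(A)^\bot}(X')\bigr) \ge \dimh\bigl(\proj_{\ker(A)^\bot}(X)\bigr) = \dimh(X),
\]
so the projection term realises the maximum in the formula for $\dimh(X')$, whence $\dimh(X') = \dimh(\proj_{\ker(A)^\bot}(X')) \le 1$. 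The main obstacle is exactly this last input: in the strongly irreducible, small-dimension regime, showing that \emph{every} direction in $\RP$ — and in particular the algebraic directions $\ker(A_j)^\bot$ singled out by matrices of the system — preserves the Hausdorff dimension of $X$ goes well beyond Marstrand and should be extracted from the pressure and thermodynamic formalism for dominated and irreducible matrices developed earlier in the paper. Granted that input, the symbolic decomposition of $X'$ and the rank one factorisation that carry the rest of the argument are bookkeeping.
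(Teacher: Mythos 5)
Your proposal addresses only part (3) of the theorem; parts (1) and (2) are not touched at all, and that is the substantive gap. Part (1) requires showing $\udimm(X') \le \dimaff((A_i)_{i\in I})$ when $\dimaff((A_i)_{i\in I}) \ge 1$, which the paper obtains by combining the covering bound $\udimm(X') \le \dimaff(\A)$ (Lemma \ref{thm:affinity-upper}) with the identity $P(\A,s) = P(I^\N,\A,s)$ for $s>1$ (Lemma \ref{thm:pressure-continuous}): above the threshold $s=1$ the rank one maps contribute nothing to the pressure, so the two affinity dimensions coincide and $\udimm(X')=\dimh(X)$ follows from Theorem \ref{thm:BHR}. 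Part (2) rests on the opposite phenomenon below the threshold, namely the strict pressure gap $P(I^\N,\A,s) < P(\A,s)$ for $0 \le s \le 1$ (Lemma \ref{thm:pressure-drop}), proved via uniqueness of equilibrium states for dominated or irreducible tuples; this forces $\dimaff(\A) > \dimaff((A_i)_{i\in I})$ and hence, by Falconer's theorem (Theorem \ref{thm:falconer}), $\dimh(X_{\mathsf{v}}') > \udimm(X_{\mathsf{v}})$ almost surely. None of this is bookkeeping; it is precisely where the thermodynamic formalism of Section \ref{sec:matrices} is used.

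For part (3) itself your argument is essentially the paper's proof of Proposition \ref{thm:prop2}: your first-bad-symbol decomposition is the identity $X' = X \cup \bigcup_{\iii\in I^*} f_\iii(C)$ with $C = \bigcup_{i\in J\setminus I} f_i(X')$ (Lemma \ref{thm:inhomog}, phrased there via inhomogeneous self-affine sets), the rank one factorisation is Lemma \ref{thm:rank-one1}, and the maximisation step is identical. One correction of attribution: the input $\dimh(\proj_V(X)) = \dimh(X)$ for \emph{every} $V \in \RP$ in the regime $\dimaff((A_i)_{i\in I}) < 1$ is not something to be extracted from the pressure formalism developed earlier in the paper; it is exactly the B\'ar\'any--Hochman--Rapaport projection theorem (Theorem \ref{thm:BHR}), invoked as a black box, and it is the only place where strict affinity, strong irreducibility and the strong open set condition enter. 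Finally, note that $J\setminus I$ may also contain rank zero matrices, whose images are singletons of dimension zero; the maximum in your formula is still attained at a rank one matrix because each projection term dominates $\dimh(X)\ge 0$, but your write-up tacitly assumes every non-invertible $A_j$ has rank one.
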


We remark that B\'ar\'any and K\"ortv\'elyesi \cite{BaranyKort2024} have recently continued the above study. They have demonstrated that if the affinity dimension is strictly less than one, then there exist two large parameter sets for the defining matrices so that in the first one, the Hausdorff dimension of the non-invertible self-affine set equals the affinity dimension, and in the second one, the Hausdorff dimension is strictly smaller than the affinity dimension. This observation proposes that determining the Hausdorff dimension in this situation requires a better understanding of the geometry.

The remainder of the paper is organized as follows. In Section \ref{sec:matrices}, we compare the behavior of the pressures and study the continuity. In particular, for dominated and irreducible matrices, we completely characterize the continuity of the pressure in the non-invertible case. In Section \ref{sec:dim-results}, we uncover how the study of non-invertible self-affine sets is connected to the theory of sub-self-affine and inhomogeneous self-affine sets, and prove the main result.

\section{Products of matrices} \label{sec:matrices}

\subsection{Rank one matrices}
We denote the collection of all $2 \times 2$ matrices with real entries by $M_2(\R)$, the general linear group of degree $2$ over $\R$ by $GL_2(\R) \subset M_2(\R)$, and the orthogonal group in dimension $2$ over $\R$ by $O_2(\R) \subset GL_2(\R)$. A matrix $A \in GL_2(\R)$ is called \emph{proximal} if it has two real eigenvalues with different absolute values. If $A \in M_2(\R)$, then the \emph{singular values} of $A$ are defined to be the non-negative square roots of the eigenvalues of the positive-semidefinite matrix ${A^\top}A$ and are denoted by $\alpha_1(A)$ and $\alpha_2(A)$ in non-increasing order. Recall that the rank of $A$ is the number of non-zero singular values of $A$. The identities $\alpha_1(A)=\|A\|$ and $\alpha_1(A)\alpha_2(A) = |\det(A)|$ for all $A \in M_2(\R)$ are standard, as is the identity $\alpha_2(A)=\|A^{-1}\|^{-1}$ in the case where $A$ is invertible. For each $A \in M_2(\R)$ and $s \geq 0$ we define the \emph{singular value function} by setting
\begin{equation*}
  \varphi^s(A)=
  \begin{cases}
    \alpha_1(A)^s, &\text{if } 0 \le s \le 1, \\ 
    \alpha_1(A)\alpha_2(A)^{s-1}, &\text{if } 1 < s \le 2, \\
    |\det(A)|^{s/2}, &\text{if } 2 < s < \infty,
  \end{cases}
\end{equation*}
where we interpret $0^0 = 1$. The value $\fii^s(A)$ represents a measurement of the $s$-dimensional volume of the image of the Euclidean unit ball under $A$. Since $\alpha_1(A)\alpha_2(A)^{s-1} = \alpha_1(A)^{2-s}|\det(A)|^{s-1}$ for all $1 < s \le 2$, the inequality $\varphi^s(AB) \leq \varphi^s(A)\varphi^s(B)$ is valid for all $s \ge 0$. In other words, the singular value function is sub-multiplicative.

Note that if $A \in M_2(\R)$ has rank one, then $\fii^s(A) = 0$ for all $s>1$. Recalling that $A$ has rank zero if and only if $A$ is the zero matrix, we see that $\fii^s(A)=0$ for all $s > 0$. Let us next recall that rank one matrices are projections. Let $\RP$ be the real projective line, that is, the set of all lines through the origin in $\R^2$. If $V,W \in \RP$, then the \emph{projection} $\proj_V^W \colon \R^2 \to V$ is the linear map such that $\proj_V^W|_V=\mathrm{Id}|_V$ and $\ker(\proj_V^W)=W$. Furthermore, the \emph{orthogonal projection} $\proj_V^{V^\bot}$ onto the subspace $V$ is denoted by $\proj_V$. The following lemma is well-known. But, as the proof is short, we provide the reader with full details.

\begin{lemma} \label{thm:rank-one1}
  A matrix $A \in M_2(\R)$ has rank one if and only if there exist $v,w \in \R^2 \setminus \{(0,0)\}$ such that $A = vw^\top$. In this case,
  \begin{equation*}
    A =
    \begin{cases}
      \langle v,w \rangle\proj_{\im(A)}^{\ker(A)}, &\text{if $A$ is not nilpotent}, \\
      |v||w|R\proj_{\ker(A)^\perp}, &\text{if $A$ is nilpotent},
    \end{cases}
  \end{equation*}
  where $R \in O_2(\R)$ is a rotation by an angle $\pi/2$. In particular, $A(X)$ is bi-Lipschitz equivalent to $\proj_{\ker(A)^\bot}(X)$ for all $X \subset \R^2$.
\end{lemma}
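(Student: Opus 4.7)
The plan is to first establish the rank-one factorization $A = vw^\top$, read off the image and kernel, dispatch the two cases with a direct computation, and then deduce the bi-Lipschitz assertion from the resulting structural formulas.

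For the equivalence I would argue as follows. If $A = vw^\top$ with $v,w \neq 0$, then $Ax = \langle w,x\rangle v$, so $\im(A) = \linspan\{v\}$ is one-dimensional. Conversely, if $\rank(A) = 1$, pick any nonzero $v \in \im(A)$. Then $x\mapsto Ax$ takes values in $\linspan\{v\}$, hence equals $x\mapsto \langle w,x\rangle v$ for the unique $w \in \R^2$ representing the associated linear functional, and $w\neq 0$ because $A\neq 0$. This description immediately gives $\im(A) = \linspan\{v\}$ and $\ker(A) = w^\bot$, and a one-line computation $A^2 x = \langle v,w\rangle\langle w,x\rangle v$ shows that $A$ is nilpotent if and only if $\langle v,w\rangle = 0$, which is exactly the condition $\im(A) \subset \ker(A)$.

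For the non-nilpotent case, $\langle v,w\rangle \neq 0$ forces $\R^2 = \im(A)\oplus\ker(A)$, so the oblique projection $\proj_{\im(A)}^{\ker(A)}$ is well defined, and the identity $A = \langle v,w\rangle \proj_{\im(A)}^{\ker(A)}$ is verified by checking both sides on $v$ and on any $x\in\ker(A)$. For the nilpotent case, $v \in w^\bot$, so there is a rotation $R$ by $\pi/2$ (of the appropriate orientation) with $v = (|v|/|w|)Rw$, and combining this with $\proj_{\ker(A)^\bot}(x) = \langle w,x\rangle w/|w|^2$ gives $Ax = \langle w,x\rangle v = |v||w|\,R\proj_{\ker(A)^\bot}(x)$.

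For the final bi-Lipschitz assertion, in both formulas $A$ factors as $T\circ \proj_{\ker(A)^\bot}$ where $T$ is a linear map from $\ker(A)^\bot$ onto $\im(A)$: in the nilpotent case $T$ is a scaled rotation, while in the non-nilpotent case the restriction of $\proj_{\im(A)}^{\ker(A)}$ to $\ker(A)^\bot$ is injective since $\ker(A) \cap \ker(A)^\bot = \{0\}$, hence a linear bijection between one-dimensional spaces. Any such linear bijection is bi-Lipschitz, so $A(X) = T(\proj_{\ker(A)^\bot}(X))$ is bi-Lipschitz equivalent to $\proj_{\ker(A)^\bot}(X)$. The only mildly delicate point is fixing the orientation of $R$ in the nilpotent subcase; everything else is a direct linear-algebraic verification.
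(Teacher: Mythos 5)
Your proposal is correct and follows essentially the same route as the paper: establish the factorization $A=vw^\top$, read off $\im(A)=\linspan(v)$ and $\ker(A)=\linspan(w)^\bot$, verify the two projection formulas by direct computation, and deduce the bi-Lipschitz claim from the projection structure. Your converse via the representing functional and your explicit factorization $A=T\circ\proj_{\ker(A)^\bot}$ are minor presentational variants of the paper's argument, not a different method.
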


\begin{proof}
  Let us first prove the characterization of rank one matrices. If $A = vw^\top$ for some $v,w \in \R^2 \setminus \{(0,0)\}$, then $Ax = vw^\top x = \langle w,x \rangle v$ for all $x \in \R^2$. Therefore, $A$ maps every $x$ to a scalar multiple of $v$, $\rank(A)=1$, and $\im(A) = \linspan(v)$. If $x \in \linspan(w)^\bot$, then $Ax = vw^\top x = \langle w,x \rangle v = 0$ and $\ker(A)=\linspan(w)^\bot$. Conversely, if $\rank(A)=1$, then there is $v \in \R^2 \setminus \{(0,0)\}$ such that $Ax$ is a scalar multiple of $v$ for all $x \in \R^2$. In particular, this is true when $x = (1,0)$ and $x = (0,1)$. That is, there are $w_1,w_2 \in \R \setminus \{0\}$ such that $A(1,0)=w_1v$ and $A(0,1)=w_2v$. In other words,
  \begin{equation*}
    A =
    \begin{pmatrix}
      w_1v_1 & w_2v_1 \\ 
      w_1v_2 & w_2v_2
    \end{pmatrix}
    = vw^\top,
  \end{equation*}
  where $w = (w_1,w_2) \in \R^2 \setminus \{(0,0)\}$.

  Let us then show that a rank one matrix $A$ is a projection. If $A$ is not nilpotent, then $\linspan(v) = \im(A) \ne \ker(A) = \linspan(w)^\bot$. Since $Ax = vw^\top x = \langle x,w \rangle v$ and it is easy to see that
  \begin{equation*}
    \proj_{\im(A)}^{\ker(A)}(x) = \frac{\langle x,w \rangle}{\langle v,w \rangle} v = \frac{1}{\langle v,w \rangle} Ax
  \end{equation*}
  for all $x \in \R^2$, we have shown the first case. If $A$ is nilpotent, then $\linspan(v) = \im(A) = \ker(A) = \linspan(w)^\bot$. Since $Rw/|w| = v/|v|$, where $R \in O_2(\R)$ is a rotation by an angle $\pi/2$, we have
  \begin{equation*}
    \proj_{\ker(A)^\bot}(x) = \proj_{\linspan(w)}(x) = \frac{\langle x,w \rangle}{|w|^2} w = \frac{\langle x,w \rangle}{|v||w|} R^{-1}v
  \end{equation*}
  and hence,
  \begin{equation*}
    R\proj_{\ker(A)^\bot}(x) = \frac{\langle x,w \rangle}{|v||w|} v = \frac{1}{|v||w|} Ax
  \end{equation*}
  as claimed.

  Since the last claim follows immediately from the the fact that a rank one matrix is a projection, we have finished the proof.
\end{proof}

\subsection{Pressure}
Let $J$ be a finite set and $\A = (A_i)_{i \in J} \in M_2(\R)^J$ be a tuple of matrices. We say that $\A$ is \emph{irreducible} if there does not exist $V \in \RP$ such that $A_iV \subset V$ for all $i \in J$; otherwise $\A$ is \emph{reducible}. Note that the irreducibility is equivalent to the property that the matrices in $\A$ do not have a common eigenvector. Therefore, $\A$ is reducible if and only if the matrices in $\A$ can simultaneously be presented (in some coordinate system) as upper triangular matrices. The tuple $\A$ is \emph{strongly irreducible} if there does not exist a finite set $\mathcal{V} \subset \RP$ such that $A_i\mathcal{V}=\mathcal{V}$ for all $i\in J$.

We call a proper subset $\CC\subset\RP$ a \emph{multicone} if it is a finite union of closed non-trivial projective intervals. We say that $\A$ is \emph{dominated} if each matrix $A_i$ is non-zero and there exists a multicone $\CC\subset\RP$ such that $A_i\CC\subset\CC^o$ for all $i \in J$, where $\CC^o$ is the interior of $\CC$. Conversely, if a multicone $\CC\subset\RP$ satisfies such a condition, then we say that $\CC$ is a \emph{strongly invariant multicone} for $\A$. For example, the first quadrant is strongly invariant for any tuple of positive matrices. Note that a dominated tuple is not necessarily irreducible and vice versa. If $\A \in GL_2(\R)^J$ is dominated and irreducible, then, by \cite[Lemma 2.10]{BaranyKaenmakiYu2021}, $\A$ is strongly irreducible.

We let $J^*$ denote the set of all finite words $\{ \varnothing \} \cup \bigcup_{n \in \N} J^n$, where $\varnothing$ satisfies $\varnothing\iii = \iii\varnothing = \iii$ for all $\iii \in J^*$. For notational convenience, we set $J^0 = \{ \varnothing \}$. The set $J^\N$ is the collection of all infinite words. We define the \emph{left shift} $\sigma \colon J^\N \to J^\N$ by setting $\sigma\iii = i_2i_3\cdots$ for all $\iii = i_1i_2\cdots \in J^\N$. The concatenation of two words $\iii \in J^*$ and $\jjj \in J^* \cup J^\N$ is denoted by $\iii\jjj \in J^* \cup J^\N$ and the length of $\iii \in J^* \cup J^\N$ is denoted by $|\iii|$. If $\jjj \in J^* \cup J^\N$ and $1 \le n < |\jjj|$, then we define $\jjj|_n$ to be the unique word $\iii \in J^n$ for which $\iii\kkk = \jjj$ for some $\kkk \in J^* \cup J^\N$. Write $\iii|_0 = \varnothing$. If $\iii \in J^* \setminus \{\varnothing\}$, then $\iii^- = \iii|_{|\iii|-1}$ is the word obtained from $\iii$ by deleting its last element. Furthermore, if $\iii \in J^n$ for some $n \in \N$, then we set $[\iii] = \{\jjj \in J^\N : \jjj|_n=\iii\}$. The set $[\iii]$ is called a \emph{cylinder set}. We write $A_\iii = A_{i_1} \cdots A_{i_n}$ for all $\iii = i_1 \cdots i_n \in J^n$ and $n \in \N$. We say that $\mathsf{A} \in GL_2(\R)^J$ is \emph{strictly affine} if there is $\iii \in I^*$ such that $A_\iii$ is proximal. Recall that $A \in GL_2(\R)$  is \emph{proximal} if it has two real eigenvalues with different absolute values. By \cite[Corollary 2.4]{BaranyKaenmakiMorris2018}, a dominated tuple in $GL_2(\R)^J$ is strictly affine.

If $\Gamma \subset J^\N$ is a non-empty compact set such that $\sigma(\Gamma) \subset \Gamma$, then we define $\Gamma_n = \{\iii|_n \in J^n : \iii \in \Gamma\}$ and $\Gamma_* = \bigcup_{n \in \N} \Gamma_n$. We keep denoting $(I^\N)_n$ and $(I^\N)_*$ by $I^n$ and $I^*$, respectively, for all $I \subset J$ and $n \in \N$. Given a tuple $\A = (A_i)_{i \in J} \in M_2(\R)^J$ of matrices, we define for each such $\Gamma \subset J^\N$ and $s \ge 0$ the \emph{pressure} by setting
\begin{equation*} 
  P(\Gamma,\A,s) = \lim_{n \to \infty} \frac{1}{n} \log\sum_{\iii \in \Gamma_n} \fii^s(A_\iii) = \inf_{n \in \N} \frac{1}{n} \log\sum_{\iii \in \Gamma_n} \fii^s(A_\iii) \in [-\infty,\infty).
\end{equation*}
The assumption $\sigma(\Gamma) \subset \Gamma$ guarantees that if $\iii \in J^m$ and $\jjj \in J^n$ such that $\iii\jjj \in \Gamma_{m+n}$, then $\iii \in \Gamma_m$ and $\jjj \in \Gamma_n$. Therefore, as the singular value function is sub-multiplicative, the sequence $(\log\sum_{\iii \in \Gamma_n} \fii^s(A_\iii))_{n \in \N}$ is sub-additive and hence, the limit above exists or is $-\infty$ by Fekete's lemma.

Let $\A$ be a tuple of strictly contractive matrices and $\Gamma \subset J^\N$ be a non-empty compact set such that $\sigma(\Gamma) \subset \Gamma$. Since $\fii^s(A_i) \le \fii^t(A_i) \max_{k \in J}\|A_k\|^{(s-t)}$ for all $i \in J$, we see that $P(\Gamma,\A,s) \le P(\Gamma,\A,t) + (s-t) \log\max_{k \in J}\|A_k\|$ for all $s > t \ge 0$. Since $\A$ consists only of strictly contractive matrices, we have $\max_{k \in J}\|A_k\|<1$ and hence, the pressure $P(\Gamma,\A,s)$ is strictly decreasing as a function of $s$ whenever it is finite. Notice also that $P(\Gamma,\A,0) = \lim_{n \to \infty} \frac{1}{n} \log \#\Gamma_n \ge 0$ and $\lim_{s \to \infty} P(\Gamma,\A,s) = -\infty$. In this case, we define the \emph{affinity dimension} by setting
\begin{equation*}
  \dimaff(\Gamma,\A) = \inf\{s \ge 0 : P(\Gamma,\A,s) \le 0\}.
\end{equation*}
Notice that if the pressure $s \mapsto P(\Gamma,\A,s)$ is continuous at $s_0 = \dimaff(\Gamma,\A)$, then $P(\Gamma,\A,s_0) = 0$.

We are interested in the properties of the pressure
\begin{equation*}
  P(\A,s) = P(J^\N,\A,s)
\end{equation*}
as a function of $s$ and the affinity dimension $\dimaff(\A) = \dimaff(J^\N,\A)$. To that end, let us introduce some further notation. Let $I = \{i \in J : A_i \text{ is invertible}\}$. In this case, we trivially have that
\begin{equation*}
  I^\N = \{\iii \in J^\N : A_{\iii|_n} \text{ is invertible for all }n \in \N\}
\end{equation*}
is a compact subset of $J^\N$ and satisfies $\sigma(I^\N) = I^\N$. Therefore, the pressure $P(I^\N,\A,s)$ is well-defined for all $s \ge 0$. We also define
\begin{equation*}
  \Sigma = \{\iii \in J^\N : A_{\iii|_n} \text{ is non-zero for all }n \in \N\}.
\end{equation*}
It is easy to see that $\Sigma$ is a compact subset of $J^\N$ and satisfies $\sigma(\Sigma) \subset \Sigma$. Indeed, if $\jjj \in \sigma(\Sigma)$, then there is $\iii \in \Sigma$ such that $\jjj = \sigma\iii$ and $A_{\iii|_n} \ne 0$ for all $n \in \N$. As clearly $A_{\sigma\iii|_n} \ne 0$ for all $n \in \N$, we see that $\jjj = \sigma\iii \in \Sigma$ as claimed. Hence, also the pressure $P(\Sigma,\A,s)$ is well-defined for all $s \ge 0$. Observe that the inclusion $\sigma(\Sigma) \subset \Sigma$ can be strict: if $J = \{0,1\}$ and
\begin{equation*}
  A_0 =
  \begin{pmatrix}
    0 & 1 \\ 0 & 0
  \end{pmatrix}, \qquad
  A_1 =
  \begin{pmatrix}
    0 & 0 \\ 0 & 1
  \end{pmatrix},
\end{equation*}
then $\Sigma = \{0111\cdots, 111\cdots\}$ and $\sigma(\Sigma) = \{111\cdots\}$.

\begin{lemma} \label{thm:pressure-continuous}
  If $\A = (A_i)_{i \in J} \in M_2(\R)^J$ satisfies $\max_{i \in J} \|A_i\| < 1$, then
  \begin{equation*}
    P(\A,s) =
    \begin{cases}
      \log \# J, &\text{if } s = 0, \\
      P(\Sigma,\A,s), &\text{if } 0 < s \le 1, \\ 
      P(I^\N,\A,s), &\text{if } 1 < s < \infty.
    \end{cases}
  \end{equation*}
  Furthermore, the function $s \mapsto P(\A,s)$ is strictly decreasing on $[0,\infty)$, continuous on $(0,1)$, and uniformly continuous on $(1,\infty)$ whenever it is finite.
\end{lemma}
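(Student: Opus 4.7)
The plan is to verify the three cases of the piecewise formula in turn and then establish continuity via convexity.

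For $s = 0$, the convention $0^0 = 1$ gives $\varphi^0(A) = 1$ for every $A \in M_2(\R)$, so $\sum_{\iii \in J^n}\varphi^0(A_\iii) = (\#J)^n$ and hence $P(\A, 0) = \log\#J$. For $s > 1$, since $\varphi^s(A) = \alpha_1(A)\alpha_2(A)^{s-1}$ vanishes whenever $\rank(A) < 2$, and since a $2 \times 2$ matrix product $A_{i_1}\cdots A_{i_n}$ is invertible if and only if every factor is, the sum over $J^n$ collapses to the sum over $I^n$ and $P(\A, s) = P(I^\N, \A, s)$.

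The substantive case is $0 < s \le 1$, where $\varphi^s(A) = \|A\|^s$ vanishes exactly when $A = 0$. Setting $\tilde\Sigma_n := \{\iii \in J^n : A_\iii \ne 0\}$, we have $\Sigma_n \subset \tilde\Sigma_n$ and $\sum_{\iii \in J^n}\varphi^s(A_\iii) = \sum_{\iii \in \tilde\Sigma_n}\varphi^s(A_\iii)$, which immediately yields $P(\Sigma, \A, s) \le P(\A, s)$. The task is the reverse inequality: one must show that the ``dead-end'' words in $\tilde\Sigma_n \setminus \Sigma_n$---non-zero products that admit no non-zero infinite extension---contribute negligibly to the exponential growth rate. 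Since $J$ is finite, K\"onig's lemma guarantees that every dead-end word has a finite maximum non-zero extension length. The plan is to exploit this to show that for $m$ sufficiently large every $\iii \in \tilde\Sigma_{n+m}$ factors through a length-$n$ prefix in $\Sigma_n$, and then combine with the sub-multiplicativity of $\varphi^s$ to conclude. This is the step I expect to be the main obstacle, since $\tilde\Sigma_n \setminus \Sigma_n$ lacks a priori uniform control in $n$.

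For the continuity statement, note that for each $n$ and each $\iii \in J^n$ the function $s \mapsto \log\varphi^s(A_\iii)$ is piecewise linear in $s$ with a convex kink at $s = 2$ (the slope increases from $\log\alpha_2(A_\iii)$ to $\tfrac{1}{2}\log|\det A_\iii|$ because $\alpha_1 \ge \alpha_2$). Hence $s \mapsto \log\sum_{\iii \in J^n}\varphi^s(A_\iii)$ is convex on each of $(0, 1]$ and $(1, \infty)$, and so is $P(\A, \cdot)$ on each half-line where it is finite; the exclusion of $s = 1$ accommodates the possible jump caused by the loss of rank-one contributions there. Convexity already yields continuity and local Lipschitz bounds on the interior, and to upgrade to uniform continuity on $(0, 1)$ and $(1, \infty)$ I would combine boundedness on $(0, 1)$ (via continuous extension to both endpoints) with a uniform Lipschitz bound on $(2, \infty)$ coming from $\varphi^s = |\det|^{s/2}$ (with constant $\tfrac{1}{2}\max_{i \in I}|\log|\det A_i||$); together with local Lipschitz control on the compact interval $[1, 2]$, this gives uniform continuity on $(1, \infty)$.
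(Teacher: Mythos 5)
Your handling of $s=0$ and $s>1$ is correct and coincides with the paper's. The genuine gap is exactly where you predicted it: the reverse inequality $P(\A,s)\le P(\Sigma,\A,s)$ for $0<s\le1$. Writing $\widetilde{\Sigma}_n=\{\iii\in J^n : A_\iii\ne 0\}$, dead ends in $\widetilde{\Sigma}_n\setminus\Sigma_n$ really do occur (take $A_0=e_1e_2^\top$ and $A_1=e_1e_1^\top$: then $A_{10}\ne 0$ but every length-$3$ extension of $10$ vanishes), and K\"onig's lemma only gives each individual dead end a finite survival time, not the uniform bound your factorization argument needs; you leave that step as a plan. (For what it is worth, the paper's proof asserts that $\fii^s(A_\iii)>0$ if and only if $\iii\in\Sigma_*$, which in the example above is false as a biconditional, so your worry is aimed at a point the paper itself glosses over.) The missing idea is this. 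If some $A_j$ is invertible there are no dead ends at all, since $A_\iii\ne0$ implies $A_{\iii j^m}=A_\iii A_j^m\ne0$ for every $m$, whence $\widetilde{\Sigma}_n=\Sigma_n$. Otherwise every matrix has rank at most one, so every non-zero product satisfies $\im(A_\iii)=\im(A_{i_1})$ and the images of non-zero products range over the finite set $\mathcal{U}=\{\im(A_i): i\in J,\ A_i\ne0\}$. For $U\in\mathcal{U}$ put $L(U)=\sup\{m : \exists\,\kkk\in J^m \text{ with } A_\kkk\ne0 \text{ and } \im(A_\kkk)=U\}$; since prefixes of non-zero products are non-zero with the same image, the set of such $m$ is an initial segment. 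A word $\iii$ with $A_\iii\ne0$ (hence $\rank(A_\iii)=1$) and $\ker(A_\iii)=W$ lies outside $\Sigma_*$ precisely when $L(U)<\infty$ for all $U\in\mathcal{U}\setminus\{W\}$, and in that case $A_{\iii\kkk}=0$ for every $\kkk\in J^{m_0}$, where $m_0:=1+\max\{L(U):U\in\mathcal{U},\ L(U)<\infty\}$ does not depend on $\iii$. Consequently $\jjj\in\widetilde{\Sigma}_{n+m_0}$ forces $\jjj|_n\in\Sigma_n$, and sub-multiplicativity gives $\sum_{\jjj\in\widetilde{\Sigma}_{n+m_0}}\fii^s(A_\jjj)\le\bigl(\#J\max\{1,\max_{i}\|A_i\|\}^{s}\bigr)^{m_0}\sum_{\iii\in\Sigma_n}\fii^s(A_\iii)$; dividing by $n+m_0$ and letting $n\to\infty$ closes the case.

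Your continuity argument is a legitimate self-contained alternative to the paper's citation of K\"aenm\"aki--Vilppolainen: the slope computation at $s=2$ is right, the log-sum-exp of convex functions is convex, and a pointwise limit of finite convex functions is convex, hence locally Lipschitz on the open pieces. One small repair: ``local Lipschitz control on $[1,2]$'' does not by itself give uniform continuity on $(1,2]$, since that interval is not compact inside the domain of convexity. You need the same endpoint device you invoke on $(0,1)$: a finite convex function on a bounded open interval is bounded above by the $n=1$ approximant $\log\sum_{i\in J}\fii^s(A_i)$ and bounded below near each endpoint by a chord, so it has finite one-sided limits and extends continuously to the closed interval. Combined with your global Lipschitz bound on $[2,\infty)$ this does yield uniform continuity on $(1,\infty)$, and the analogous extension handles $(0,1)$.
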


\begin{proof}
  Recall first that $\fii^s(A) = \alpha_1(A)^s = \|A\|^s$ for all $0 \le s \le 1$. Therefore, as we interpreted $0^0=1$, we have
  \begin{equation*}
    P(\A,0) = \lim_{n \to \infty} \frac{1}{n} \log \sum_{\iii \in J^n} \|A_\iii\|^0 = \log \# J.
  \end{equation*}
  Since $\alpha_1(A)>0$ if and only if $A \in M_2(\R)$ is non-zero, we see that for each $0 < s \le 1$ the singular value function satisfies $\fii^s(A_\iii) = \|A_\iii\|^s > 0$ if and only if $\iii \in \Sigma_*$. Therefore, $P(\A,s) = P(\Sigma,\A,s)$ for all $0 < s \le 1$. Furthermore, since $\alpha_2(A)>0$ if and only if $A \in GL_2(\R)$, we have that for every $1<s<\infty$ the singular value function satisfies $\fii^s(A_\iii)>0$ if and only if $\iii \in I^*$. This shows $P(\A,s) = P(I^\N,\A,s)$ for all $1<s<\infty$. The function $s \mapsto P(\A,s)$ has already seen strictly decreasing. The continuity on $(0,1)$ follows from \cite[Theorem 1.2(3)]{FengShmerkin2014} and the uniform continuity on $(1,\infty)$ follows directly from \cite[Lemma 2.1]{KaenmakiVilppolainen2010}.
\end{proof}

The following lemma characterizes the continuity of the function $s \mapsto P(\A,s)$ at $0$.

\begin{lemma} \label{thm:pressure-right-continuous-at-zero}
  If $\A = (A_i)_{i \in J} \in M_2(\R)^J$ satisfies $\max_{i \in J} \|A_i\| < 1$, then the function $s \mapsto P(\A,s)$ is right-continuous at $0$ if and only if the semigroup $\{A_\iii : \iii \in J^*\}$ does not contain rank zero matrices.
\end{lemma}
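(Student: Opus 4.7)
The plan is to exploit Lemma~\ref{thm:pressure-continuous}, which identifies $P(\A,s)=P(\Sigma,\A,s)$ for $s\in(0,1]$ with $P(\A,0)=\log\#J$; note that $\Sigma=J^\N$ precisely when $\{A_\iii:\iii\in J^*\}$ contains no rank-zero matrix. The sub-additive characterization of pressure gives the uniform upper bound $P(\A,s)\le\tfrac{1}{n}\log\sum_{\iii\in\Sigma_n}\|A_\iii\|^s$ for every $n$, and since $A_\iii\ne 0$ on $\Sigma_n$ forces $\|A_\iii\|^s\to 1$ as $s\to 0^+$, this yields $\limsup_{s\to 0^+}P(\A,s)\le h(\Sigma):=\lim_n\tfrac{1}{n}\log\#\Sigma_n$ after passing to the infimum over $n$. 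For the only-if direction, assume $A_\jjj=0$ for some $\jjj\in J^m$: every infinite word containing $\jjj$ as a consecutive subword is excluded from $\Sigma$, and grouping a length-$n$ word into $\lfloor n/m\rfloor$ disjoint length-$m$ blocks plus a tail gives the standard pattern-avoidance bound $\#\Sigma_n\le(\#J^m-1)^{\lfloor n/m\rfloor}\#J^{n-m\lfloor n/m\rfloor}$, so $h(\Sigma)\le\tfrac{1}{m}\log(\#J^m-1)<\log\#J=P(\A,0)$. Combined with the upper bound this forces $\lim_{s\to 0^+}P(\A,s)<P(\A,0)$, so $P(\A,\cdot)$ is not right-continuous at $0$.

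For the if direction, $\Sigma=J^\N$ and $h(\Sigma)=\log\#J$, so I need the matching lower bound. Jensen's inequality applied to the uniform Bernoulli measure on $J^\N$ gives
\[
  \sum_{\iii\in J^n}\|A_\iii\|^s\ge\#J^n\exp\Bigl(\tfrac{s}{\#J^n}\textstyle\sum_{\iii\in J^n}\log\|A_\iii\|\Bigr),
\]
so $\tfrac{1}{n}\log\sum_{\iii}\|A_\iii\|^s\ge\log\#J+s\,L_n/n$ where $L_n:=\#J^{-n}\sum_{\iii\in J^n}\log\|A_\iii\|$. Sub-multiplicativity $\|A_{\jjj\kkk}\|\le\|A_\jjj\|\|A_\kkk\|$ makes $(L_n)$ sub-additive, so by Fekete $L_n/n\to\Lambda:=\inf_nL_n/n$ and $L_n/n\ge\Lambda$ for every $n$. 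Provided that $\Lambda>-\infty$, taking the infimum over $n$ gives $P(\A,s)\ge\log\#J+s\Lambda$, and hence $\liminf_{s\to 0^+}P(\A,s)\ge\log\#J$, matching the upper bound and finishing the proof.

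The heart of the argument is the finiteness of $\Lambda$, which I would verify by a case analysis based on ranks. If $\A\subset GL_2(\R)$ it is immediate from $\|A_\iii\|\ge\alpha_2(A_\iii)\ge\prod_k\alpha_2(A_{i_k})\ge c^n$ with $c=\min_i\alpha_2(A_i)>0$, so $L_n/n\ge\log c$. If every $A_i$ has rank one, writing $A_i=v_iw_i^\top$ yields the telescoping identity
\[
  \|A_\iii\|=|v_{i_1}||w_{i_n}|\textstyle\prod_{k=1}^{n-1}|\langle w_{i_k},v_{i_{k+1}}\rangle|,
\]
and the no-zero hypothesis forces every $|\langle w_i,v_j\rangle|$ to be strictly positive, so $L_n/n$ converges to the finite constant $\#J^{-2}\sum_{i,j}\log|\langle w_i,v_j\rangle|$. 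In the mixed case one splits each $\iii$ at the first position where the partial product becomes rank one, controls the rank-two prefix by the determinant bound, and controls the rank-one tail by the telescoping identity; the no-zero hypothesis translates into uniform positive lower bounds on the inner products at the seams, giving an exponential lower bound $\|A_\iii\|\ge c^{|\iii|}$ and hence $\Lambda>-\infty$. Managing these constants cleanly through the mixed case is the main technical obstacle.
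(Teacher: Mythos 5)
Your ``only if'' direction is correct and is essentially the paper's argument: from $P(\A,s)\le\frac1n\log\sum_{\iii\in\Sigma_n}\|A_\iii\|^s$ one lets $s\downarrow0$ and compares with $\frac1n\log\#\Sigma_n$; note that a single $n$ with $\#\Sigma_n<\#J^n$ already suffices, so the pattern-avoidance entropy estimate, while valid, is not needed. The ``if'' direction, however, contains a genuine gap. Your whole argument rests on the finiteness of the annealed exponent $\Lambda=\lim_n L_n/n$, and the claim that the no-rank-zero hypothesis yields ``uniform positive lower bounds on the inner products at the seams'' and hence $\|A_\iii\|\ge c^{|\iii|}$ in the mixed case is false. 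Take $J=\{0,1\}$ with $A_1=\tfrac12R_\theta$ a scaled irrational rotation and $A_0=vw^\top$ of rank one. Every product in the semigroup is non-zero as soon as $\langle w,A_1^kv\rangle=2^{-k}|v||w|\cos(\phi_0+k\theta)\ne0$ for all $k\ge0$, but for a Liouville-type $\theta$ one can arrange $|\cos(\phi_0+k_j\theta)|\le e^{-e^{k_j}}$ along a sparse subsequence while it never vanishes. The rank-one telescoping identity gives $\log\|A_\iii\|=-n\log 2+\sum_l\log|\cos(\phi_0+m_l\theta)|+O(q)$, where $m_1,\dots,m_{q-1}$ are the internal gaps between consecutive $0$'s in $\iii$; since a uniformly random word of length $n$ has about $n2^{-k-2}$ internal gaps of length $k$, one gets $L_n/n\le-\log 2-\tfrac14\sum_{k_j\le n}(e/2)^{k_j}+o(1)\to-\infty$. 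Hence $\Lambda=-\infty$, your Jensen bound $P(\A,s)\ge\log\#J+s\Lambda$ is vacuous, and the mixed case cannot be ``managed'' along the lines you indicate.

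The underlying phenomenon is that the uniform Bernoulli average $L_n/n$ is dragged to $-\infty$ by exponentially rare words whose norms decay super-exponentially, even though the pressure itself need not be. Any correct self-contained proof must discard those words rather than average over them: for instance, for each $\eps>0$ one restricts to a subshift of entropy at least $\log\#J-\eps$ on which $\|A_\iii\|\ge\rho^{|\iii|}$ does hold for some $\rho=\rho(\eps)>0$, which yields $\liminf_{s\downarrow0}P(\A,s)\ge\log\#J-\eps$ and then lets $\eps\to0$. The paper sidesteps this entirely by invoking the argument of \cite[Lemma 2.1]{KaenmakiVilppolainen2010} (where the lower bound $\|A_\iii\|\ge\alpha_2(A_\iii)\ge c^{|\iii|}$ is automatic for invertible matrices); the step you tried to make explicit is precisely where the content of the lemma lies, and as written your proof of it does not go through.
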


\begin{proof}
  If the semigroup $\{A_\iii : \iii \in J^*\}$ does not contain rank zero matrices, then $\Sigma = J^\N$ and the right-continuity at $0$ is guaranteed by Lemma \ref{thm:pressure-continuous}. If $A_\iii$ has rank zero for some $\iii \in J^n$ and $n \in \N$, then clearly $\# \Sigma_n < \# J^n = (\# J)^n$. Fix $0 < s \le 1$ and notice that Lemma \ref{thm:pressure-continuous} implies
  \begin{equation*}
    P(\A,s) \le \frac{1}{n} \log \sum_{\iii \in \Sigma_n} \|A_\iii\|^s
  \end{equation*}
  and
  \begin{equation*}
    \lim_{s \downarrow 0} P(\A,s) \le \frac{1}{n} \log \# \Sigma_n < \frac{1}{n} \log \# J^n = P(\A,0),
  \end{equation*}
  where the limit exists by Lemma \ref{thm:pressure-continuous}. In particular, the function $s \mapsto P(\A,s)$ is not right-continuous at $0$.   
\end{proof}

The possible discontinuity at $1$ has already been observed by Feng and Shmerkin \cite[Remark 1.1]{FengShmerkin2014}. In their example, the pressure is not finite when $s > 1$, but it is easy to see that this is not a necessity. If $J = \{0,1\}$ and
\begin{equation*}
  A_0 =
  \begin{pmatrix}
    1 & 0 \\ 0 & 0
  \end{pmatrix}, \qquad
  A_1 =
  \begin{pmatrix}
    1 & 0 \\ 0 & 1
  \end{pmatrix},
\end{equation*}
then, by Lemma \ref{thm:pressure-continuous}, for $\A = (A_0,A_1) \in M_2(\R)^J$ we have $P(\A,1) = \log 2$ and $P(\A,s) = 0$ for all $s>1$. The continuity of the function $s \mapsto P(\A,s)$ at $1$ will be characterized for dominated and irreducible tuples in Lemma \ref{thm:pressure-continuous-at-one}.

Let us next determine when the pressure is finite. For that, we need the following definition. Given a tuple $\A = (A_i)_{i \in J} \in M_2(\R)^J$ of matrices, we define the \emph{joint spectral radius} by setting
\begin{equation*}
  \roo(\A) = \lim_{n \to \infty} \max_{\iii \in J^n} \|A_\iii\|^{1/n}.
\end{equation*}
As the operator norm is sub-multiplicative, the sequence $(\log \max_{\iii \in J^n} \|A_\iii\|)_{n \in \N}$ is sub-additive and hence, the limit above exists by Fekete's lemma.

\begin{lemma} \label{thm:joint-spectral-radius}
  If $\A = (A_i)_{i \in J} \in M_2(\R)^J$ is dominated or irreducible, then $\roo(\A) > 0$.
\end{lemma}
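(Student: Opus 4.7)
The plan is to argue by contradiction: assume $\roo(\A)=0$ and derive a contradiction in each of the two cases. The key reduction is that for every finite word $\iii\in J^*$ one has
\begin{equation*}
  \roo(\A) \;\ge\; \lim_{m\to\infty} \|A_\iii^m\|^{1/(m|\iii|)} \;=\; \rho(A_\iii)^{1/|\iii|},
\end{equation*}
obtained by applying the definition of $\roo(\A)$ to the periodic word $\iii\iii\cdots$ together with Gelfand's formula. Hence $\roo(\A)=0$ forces every product $A_\iii$ to have spectral radius zero, i.e.\ to be nilpotent; in the $2\times 2$ setting this means $A_\iii$ is either zero or of rank one with $\ker(A_\iii)=\im(A_\iii)$. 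The strategy is to show that this rigidity is incompatible with either domination or irreducibility.

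For the dominated case, I would show that no individual $A_i$ can be nilpotent. Indeed, by Lemma~\ref{thm:rank-one1} a nonzero nilpotent rank one $A_i$ satisfies $\im(A_i)=\ker(A_i)$, so it sends every line $V\ne\ker(A_i)$ to the single line $\ker(A_i)$ and collapses $V=\ker(A_i)$ itself to $\{0\}$. The multicone condition $A_i\CC\subset\CC^o$ then yields the contradictory pair of requirements $\ker(A_i)\in\CC^o$ (as the image of any $V\in\CC$ different from $\ker(A_i)$) and $\ker(A_i)\notin\CC$ (to prevent some line in $\CC$ from collapsing to $0$). Hence each $A_i$ is non-nilpotent and $\roo(\A)\ge\rho(A_i)>0$.

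For the irreducible case, if some $A_i$ is non-nilpotent we are immediately done. Otherwise every $A_i$ is nilpotent, and each nonzero generator has the unique invariant line $\ker(A_i)=\im(A_i)$. Zero matrices impose no constraint on the existence of a common invariant line, so irreducibility forces the nonzero generators to have at least two distinct kernels; pick $i,j\in J$ with $A_i,A_j$ nonzero and $\ker(A_i)\ne\ker(A_j)$. A brief direct computation, using $\im(A_j)=\ker(A_j)$ and $\ker(A_i)\cap\ker(A_j)=\{0\}$, shows that $\ker(A_iA_j)=\ker(A_j)$ while $\im(A_iA_j)=\ker(A_i)$. Since these two lines are distinct, the rank one matrix $A_iA_j$ is not nilpotent (by Lemma~\ref{thm:rank-one1}, rank one nilpotency is equivalent to $\ker=\im$), so $\rho(A_iA_j)>0$ and consequently $\roo(\A)\ge\rho(A_iA_j)^{1/2}>0$.

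The main (mild) obstacle is the bookkeeping with possibly-zero matrices in the irreducible case, namely verifying that irreducibility really does supply two nonzero generators with distinct kernels; this reduces to unpacking the definition of irreducibility, since the zero matrix leaves every line invariant. Everything else is elementary $2\times 2$ linear algebra combined with Gelfand's formula.
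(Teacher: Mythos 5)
Your proof is correct, but it takes a genuinely different route from the paper's. Your key move is the reduction $\roo(\A)\ge\rho(A_\iii)^{1/|\iii|}$ via periodic words and Gelfand's formula, which turns the claim into exhibiting a single non-nilpotent product $A_\iii$; you then settle both cases by elementary $2\times2$ classification of nilpotents (zero, or rank one with kernel equal to image). The paper instead bounds norms of products directly: in the dominated case it invokes \cite[Lemma 2.2]{BochiMorris2015} to get $\|A_\iii|V\|\ge\kappa\|A_\iii\|$ on the invariant multicone and concludes $\roo(\A)\ge\kappa^2\min_{j\in J}\|A_j\|$, and in the irreducible case it uses compactness of the unit circle to produce a uniform $\delta>0$ with $\max_{i\in J}|A_ix|\ge\delta$ for every unit vector $x$ and then greedily builds words with $\|A_{\iii_n}\|\ge\delta^n$. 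Your dominated-case argument is more self-contained (a nilpotent generator would force $\ker(A_i)=\im(A_i)$ to lie in $\CC^o$ while simultaneously having to avoid $\CC$, which is absurd), and your irreducible-case argument correctly handles the bookkeeping with zero generators: if all nonzero generators shared a kernel that line would be invariant, so two distinct kernels exist and the length-two product $A_iA_j$ has $\im(A_iA_j)=\ker(A_i)\ne\ker(A_j)=\ker(A_iA_j)$, hence is not nilpotent. What the paper's approach buys is explicit quantitative lower bounds and, in the dominated case, the inequality \eqref{eq:bochi-morris} that is reused later (e.g.\ in Lemma \ref{thm:weak-gibbs}); what yours buys is elementarity and independence from the Bochi--Morris lemma, at the price of being tied to the planar classification of nilpotent matrices, whereas the paper's irreducible argument generalizes verbatim to higher dimensions.
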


\begin{proof}
  Let us first assume that $\A$ is dominated and $\CC \subset \RP$ is a strongly invariant multicone for $\A$. Since there exists a multicone $\CC_0 \subset \RP$ such that $\bigcup_{\iii \in J^n} A_\iii \CC \subset \bigcup_{i \in J} A_i \CC \subset \CC_0 \subset \CC^o$ for all $n \in \N$, we find, by applying \cite[Lemma 2.2]{BochiMorris2015}, a constant $\kappa > 0$ such that
  \begin{equation} \label{eq:bochi-morris}
    \|A_\iii|V\| \ge \kappa \|A_\iii\|
  \end{equation}
  for all $V \in \CC_0$ and $\iii \in J^*$. It follows that if $V \in \CC_0$, then $A_\jjj V \in \CC_0$ and $\|A_\iii A_\jjj\| \ge \|A_\iii A_\jjj | V\| = \|A_\iii|A_\jjj V\| \|A_\jjj|V\| \ge \kappa^2\|A_\iii\|\|A_\jjj\|$ for all $\iii,\jjj \in J^*$. Therefore,
  \begin{equation*}
    \roo(\A) \ge \liminf_{n \to \infty} \max_{i_1 \cdots i_n \in J^n} \kappa^{2(n-1)/n} \|A_{i_1}\|^{1/n} \cdots \|A_{i_n}\|^{1/n} \ge \kappa^2 \min_{j \in J} \|A_j\| > 0
  \end{equation*}
  as claimed.

  Although the proof in the irreducible case can be found in \cite[Lemma 2.2]{Jungers2009}, we present the full details for the convenience of the reader. Denote the unit circle by $S^1$ and suppose that for each $k \in \N$ there is $x_k \in S^1$ such that for every $i \in J$ we have $|A_ix_k| < \frac{1}{k}$. By the compactness of $S^1$, there is $x \in S^1$ such that $|A_ix|=0$ for all $i \in J$. Choosing $V = \linspan(x) \in \RP$, we see that $A_iV = \{(0,0)\} \subset V$ for all $i \in J$ and $\A$ is reducible.

  It follows that there is $\delta > 0$ such that for every $x \in S^1$ there exists $i \in J$ for which $|A_ix| \ge \delta$. Let us next apply this inductively. Fix $x_0 \in S^1$ and choose $i_1 \in J$ such that $|A_{i_1}x_0| \ge \delta$. Write $x_1 = A_{i_1}x_0$ and choose $i_2 \in J$ such that $|A_{i_2}\frac{x_1}{|x_1|}| \ge \delta$ whence $|A_{i_2}A_{i_1}x_0| = |A_{i_2}x_1| \ge \delta|x_1| = \delta|A_{i_1}x_0| \ge \delta^2$. Continuing in this manner, we find for each $n \in \N$ a word $\iii_n \in J^n$ such that $\|A_{\iii_n}\| \ge |A_{\iii_n} x_0| \ge \delta^n$. Hence,
  \begin{equation*}
    \roo(\A) \ge \liminf_{n \to \infty} \|A_{\iii_n}\|^{1/n} \ge \delta > 0
  \end{equation*}
  as wished.
\end{proof}

The following two lemmas characterize the finiteness of the pressure.

\begin{lemma} \label{thm:pressure-finite1}
  If $\A = (A_i)_{i \in J} \in M_2(\R)^J$ satisfies $\max_{i \in J} \|A_i\| < 1$, then the following five conditions are equivalent:
  \begin{enumerate}
    \item\label{it:11} $P(\A,s) > -\infty$ for all $0 \le s \le 1$,
    \item\label{it:12} $\lim_{s \downarrow 0} P(\A,s) > -\infty$,
    \item\label{it:13} there does not exist $n \in \N$ such that $A_\iii = 0$ for all $\iii \in J^n$,
    \item\label{it:14} there exists $\jjj \in J^\N$ such that $A_{\jjj|_n} \ne 0$ for all $n \in \N$,
    \item\label{it:15} $\roo(\A)>0$.
  \end{enumerate}
  Furthermore, all of these conditions hold if $\A$ is dominated or irreducible.
\end{lemma}

\begin{proof}
  Notice that the limit in \eqref{it:12} exists by Lemma \ref{thm:pressure-continuous} and the implications \eqref{it:11} $\Rightarrow$ \eqref{it:12} and \eqref{it:14} $\Rightarrow$ \eqref{it:13} are trivial. Let us first show the implication \eqref{it:12} $\Rightarrow$ \eqref{it:13}. If \eqref{it:13} does not hold, then there exists $n_0 \in \N$ such that $A_\iii = 0$ for all $\iii \in J^{n_0}$. Since now $\|A_\iii\| = 0$ for all $\iii \in J^n$ and $n \ge n_0$, we see that $P(\A,s) = -\infty$ for all $s>0$ and \eqref{it:12} cannot hold.

  Let us then show the implication \eqref{it:13} $\Rightarrow$ \eqref{it:14}. If \eqref{it:14} does not hold, then for every $\jjj \in J^\N$ there is $n(\jjj) \in \N$ such that $A_{\jjj|_{n(\jjj)}} = 0$. By compactness of $J^\N$, there exist $M \in \N$ and $\jjj_1,\ldots,\jjj_M \in J^\N$ such that $\{[\jjj_i|_{n(\jjj_i)}]\}_{i=1}^M$ still covers $J^\N$. Choosing $n = \max_{i \in \{1,\ldots,M\}} n(\jjj_i)$, we see that for every $\iii \in J^n$ there is $i \in \{1,\ldots,M\}$ such that $A_\iii = A_{\jjj_i|_{n(\jjj_i)}}A_{\sigma^{n(\jjj_i)}\iii} = 0$ and \eqref{it:13} cannot hold.

  Since $\A$ is a tuple of strictly contractive matrices, the function $s \mapsto P(\A,s)$ is strictly decreasing whenever it is finite. Therefore, we have $P(\A,s) \ge P(\A,1) \ge \log \roo(\A)$ for all $0 \le s \le 1$ and hence, we have the implication \eqref{it:15} $\Rightarrow$ \eqref{it:11}. Therefore, to conclude the proof, it suffices to show the implication \eqref{it:13} $\Rightarrow$ \eqref{it:15} and also verify condition \eqref{it:15} when $\A$ is dominated or irreducible. While the latter is immediately assured by Lemma \ref{thm:joint-spectral-radius}, we also see that to prove the former, we may assume that $\A$ is reducible. This means that, after possibly a change of basis, the matrices $A_i$ in $\A$ are of the form
  \begin{equation*}
    A_i =
    \begin{pmatrix}
      a_i & b_i \\ 
      0 & c_i
    \end{pmatrix}
  \end{equation*}
  for all $i \in J$. Since $A_i(1,0) = a_i(1,0)$ and $A_i(\frac{b_i}{c_i-a_i},1) = c_i(\frac{b_i}{c_i-a_i},1)$ when $a_i \ne c_i$, we see that $\max\{|a_i|,|c_i|\} \le \|A_i\|$ for all $i \in J$. As the product of upper triangular matrices is upper triangular with diagonal entries obtained as products of the corresponding diagonal entries, we also have $\max\{|a_{i_1} \cdots a_{i_n}|, |c_{i_1} \cdots c_{i_n}|\} \le \|A_\iii\|$ for all $\iii = i_1 \cdots i_n \in J^n$ and $n \in \N$. Therefore, if condition \eqref{it:15} does not hold i.e.\ $\roo(\A) = 0$, then
  \begin{equation*}
    \max_{i \in J} |a_i| = \lim_{n \to \infty} \max_{i_1 \cdots i_n \in J^n} |a_{i_1} \cdots a_{i_n}|^{1/n} \le \roo(\A) = 0
  \end{equation*}
  and, similarly, $\max_{i \in J} |c_i| = 0$. In other words, the diagonal entries in all of the matrices $A_i$ are zero. Thus, $A_\iii = 0$ for all $\iii \in J^2$ and condition \eqref{it:13} does not hold.
\end{proof}

\begin{lemma} \label{thm:pressure-finite2}
  If $\A = (A_i)_{i \in J} \in M_2(\R)^J$ satisfies $\max_{i \in J} \|A_i\| < 1$, then the following five conditions are equivalent:
  \begin{enumerate}
    \item\label{it:20} $P(I^\N,\A,s) > -\infty$ for all $s \ge 0$,
    \item\label{it:21} $P(\A,s) > -\infty$ for all $s \ge 0$,
    \item\label{it:22} $\lim_{s \downarrow 1} P(\A,s) > -\infty$,
    \item\label{it:23} there does not exist $n \in \N$ such that $A_\iii$ has rank at most one for all $\iii \in J^n$,
    \item\label{it:24} there exists $j \in J$ such that $A_j \in GL_2(\R)$.
  \end{enumerate}
\end{lemma}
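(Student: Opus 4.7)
The plan is to verify the separate equivalence $(4)\Leftrightarrow(5)$ and then to close the cycle $(5)\Rightarrow(1)\Rightarrow(2)\Rightarrow(3)\Rightarrow(5)$. The equivalence $(4)\Leftrightarrow(5)$ is immediate: if $A_j\in GL_2(\R)$ for some $j\in J$, then the products $A_j^n$ are invertible for every $n$, witnessing (4); conversely, if every $A_j$ has rank at most one, then Lemma \ref{thm:rank-one1} lets one write $A_j=v_jw_j^\top$, and the identity $(v_{j_1}w_{j_1}^\top)(v_{j_2}w_{j_2}^\top)=\langle w_{j_1},v_{j_2}\rangle v_{j_1}w_{j_2}^\top$ shows inductively that every product of such matrices again has rank at most one, so already $n=1$ violates (4).

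The main technical step is $(5)\Rightarrow(1)$. Fix $j\in I$ and let $\iii_n=jj\cdots j\in I^n$ be the word of length $n$, so that $A_{\iii_n}=A_j^n$. A case analysis across the three regimes in the definition of $\fii^s$ yields the uniform bound $\fii^s(A)\ge\alpha_2(A)^s$ for every $A\in GL_2(\R)$ and every $s\ge 0$ (the case $s>2$ uses $\alpha_1\ge\alpha_2$). The super-multiplicativity $\alpha_2(AB)\ge\alpha_2(A)\alpha_2(B)$ on $GL_2(\R)$, which follows at once from $\|(AB)^{-1}\|\le\|A^{-1}\|\|B^{-1}\|$, then gives $\alpha_2(A_j^n)\ge\alpha_2(A_j)^n$ and hence $\fii^s(A_j^n)\ge\alpha_2(A_j)^{ns}$. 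Consequently
\begin{equation*}
  P(I^\N,\A,s)=\inf_{n\in\N}\frac{1}{n}\log\sum_{\iii\in I^n}\fii^s(A_\iii)\ge\inf_{n\in\N}\frac{1}{n}\log\fii^s(A_j^n)\ge s\log\alpha_2(A_j)>-\infty.
\end{equation*}

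The remaining three implications are short. For $(1)\Rightarrow(2)$, Lemma \ref{thm:pressure-continuous} already gives $P(\A,0)=\log\#J$, $P(\A,s)=P(I^\N,\A,s)>-\infty$ for $s>1$, and $P(\A,s)=P(\Sigma,\A,s)$ for $0<s\le1$; since any word in $I^n$ extends to an element of $\Sigma$ by appending infinitely many copies of $j$, we have $I^n\subset\Sigma_n$ and therefore $P(\Sigma,\A,s)\ge P(I^\N,\A,s)>-\infty$. For $(2)\Rightarrow(3)$ the pressure is strictly decreasing wherever finite (noted just before Lemma \ref{thm:pressure-right-continuous-at-zero}), so $\lim_{s\downarrow1}P(\A,s)\ge P(\A,2)>-\infty$. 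The closing step $(3)\Rightarrow(5)$ is the contrapositive of the rank argument from the first paragraph: if no $A_j$ is invertible, then every $A_\iii$ has rank at most one, hence $\fii^s(A_\iii)=\alpha_1(A_\iii)\alpha_2(A_\iii)^{s-1}=0$ for every $s>1$, forcing $P(\A,s)=-\infty$ throughout $(1,\infty)$ and a $-\infty$ limit at $1^+$. The one place any care is needed is the uniform lower bound in $(5)\Rightarrow(1)$: the neat feature is that the single estimate $\fii^s\ge\alpha_2^s$ on $GL_2(\R)$, combined with the super-multiplicativity of $\alpha_2$, handles all $s\ge 0$ simultaneously.
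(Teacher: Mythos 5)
Your argument is essentially the paper's: the decisive step $(5)\Rightarrow(1)$ uses the same constant word $jj\cdots$, the bound $\fii^s(A)\ge\alpha_2(A)^s$ and the super-multiplicativity of $\alpha_2$ on $GL_2(\R)$, and the implications involving \eqref{it:23} and \eqref{it:24} rest on the same rank-of-products observation; splitting off $(4)\Leftrightarrow(5)$ instead of running one five-step cycle is immaterial. One justification needs repair, however. In $(2)\Rightarrow(3)$ you invoke the strict monotonicity of $s\mapsto P(\A,s)$, but the paper establishes that only under the hypothesis $\max_{i\in J}\|A_i\|<1$, which is \emph{not} assumed in this lemma; for a non-contractive tuple (say $J=\{1\}$ and $A_1=2\,\mathrm{Id}$, where $P(\A,s)=s\log 2$ is increasing) your inequality $\lim_{s\downarrow 1}P(\A,s)\ge P(\A,2)$ is simply false. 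The conclusion you need is still immediate: under \eqref{it:21} the pressure is finite on $(1,\infty)$, hence uniformly continuous there by Lemma \ref{thm:pressure-continuous}, so the limit at $1^+$ exists and is finite. Two cosmetic points: in $(4)\Rightarrow(5)$ some $A_j$ may be the zero matrix, which is not of the form $v_jw_j^\top$ with $v_j,w_j\ne 0$, but $\rank(AB)\le\min\{\rank(A),\rank(B)\}$ disposes of that case trivially; and in $(3)\Rightarrow(5)$ the formula $\alpha_1(A_\iii)\alpha_2(A_\iii)^{s-1}$ is only the definition of $\fii^s$ for $1<s\le 2$, though for $s>2$ the determinant formula vanishes on rank-deficient matrices just the same. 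With the $(2)\Rightarrow(3)$ fix, the proof is complete.
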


\begin{proof}
  Notice that the limit in \eqref{it:22} exists by Lemma \ref{thm:pressure-continuous} and the implications \eqref{it:20} $\Rightarrow$ \eqref{it:21} and \eqref{it:21} $\Rightarrow$ \eqref{it:22} are trivial. Let us first show the implication \eqref{it:22} $\Rightarrow$ \eqref{it:23}. If \eqref{it:23} does not hold, then there exists $n_0 \in \N$ such that $A_\iii$ has rank at most one for all $\iii \in J^{n_0}$. It follows that for every $\iii \in J^n$ and $n \ge n_0$ the rank of $A_\iii$ is at most one as it is bounded above by the rank of $A_{\iii|_{n_0}}$. Therefore, as $\fii^s(A_\iii) = 0$ for all $\iii \in J^n$, $n \ge n_0$, and $s>1$, we have $P(\A,s) = -\infty$ for all $s>1$ and \eqref{it:22} cannot hold.

  Let us then show the implication \eqref{it:23} $\Rightarrow$ \eqref{it:24}. If \eqref{it:24} does not hold, then $A_j$ has rank at most one for all $j \in J$. It follows that for every $\iii \in J^n$ and $n \in \N$ the rank of $A_\iii$ is at most one and \eqref{it:23} cannot hold.

  Finally, let us show the implication \eqref{it:24} $\Rightarrow$ \eqref{it:20}. The condition \eqref{it:24} implies that $A_{\jjj|_n} \in GL_2(\R)$ for all $n \in \N$ where $\jjj = jj\cdots \in J^\N$. Since $\fii^s(A_{\jjj|_n}) \ge \alpha_2(A_{\jjj|_n}) \ge \alpha_2(A_j)^n > 0$ for all $n \in \N$ and $s \ge 0$, we see that $P(I^\N,\A,s) \ge \log\alpha_2(A_j) > -\infty$ for all $s \ge 0$ as wished.
\end{proof}

\subsection{Equilibrium states}
Let $\MM_\sigma(J^\N)$ be the collection of all $\sigma$-invariant Borel probability measures on $J^\N$. If $0 < s \le 1$, then we say that a measure $\mu_K \in \MM_\sigma(J^\N)$ is \emph{$s$-Gibbs-type} if there exists a constant $C \ge 1$ such that
\begin{equation*}
  C^{-1}e^{-nP(\A,s)}\|A_\iii\|^s \le \mu_K([\iii]) \le Ce^{-nP(\A,s)}\|A_\iii\|^s
\end{equation*}
for all $\iii \in J^n$ and $n \in \N$.

\begin{lemma} \label{thm:weak-gibbs}
  If $\A = (A_i)_{i \in J} \in M_2(\R)^J$ satisfies $\max_{i \in J} \|A_i\| < 1$ and is dominated or irreducible, then for every $0 < s \le 1$ there exist a unique ergodic $s$-Gibbs-type measure $\mu_K \in \MM_\sigma(J^\N)$.
\end{lemma}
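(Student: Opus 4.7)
The plan is to exploit the fact that on $0 < s \le 1$ the singular value function reduces to $\fii^s(A) = \|A\|^s$, so the desired $s$-Gibbs-type measures are equilibrium measures for the sub-multiplicative potential $\iii \mapsto \|A_\iii\|^s$. By Lemma \ref{thm:pressure-finite1} the pressure $P(\A,s)$ is finite under our hypotheses, and the construction of an ergodic Gibbs-type measure for such potentials is by now a standard theme in sub-additive thermodynamic formalism. The proof therefore reduces to establishing a quasi-multiplicativity property of $\|\cdot\|^s$, which upgrades the automatic sub-multiplicative upper bound on cylinder masses to a matching lower bound.

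In the dominated case the required quasi-multiplicativity is essentially already in the proof of Lemma \ref{thm:joint-spectral-radius}. The Bochi-Morris inequality \eqref{eq:bochi-morris} gives $\|A_\iii|V\| \ge \kappa\|A_\iii\|$ for every $V \in \CC_0$, and since $A_\jjj V \in \CC_0$ whenever $V \in \CC_0$, applying this bound twice yields
\begin{equation*}
  \|A_\iii A_\jjj\| \ge \|A_\iii|A_\jjj V\|\,\|A_\jjj|V\| \ge \kappa^2\|A_\iii\|\,\|A_\jjj\|,
\end{equation*}
whence $\|A_\iii A_\jjj\|^s \ge \kappa^{2s}\|A_\iii\|^s\|A_\jjj\|^s$ for all $\iii,\jjj \in J^*$. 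In the irreducible case this direct multiplicativity can fail, and one must instead produce a finite set $\UU \subset J^*$ and a constant $c > 0$ such that for all $\iii,\jjj \in J^*$ some $\kkk \in \UU$ satisfies $\|A_\iii A_\kkk A_\jjj\| \ge c\|A_\iii\|\,\|A_\jjj\|$. The existence of such $\UU$ is the standard consequence of irreducibility: for every pair $V,W \in \RP$ there is a bounded-length word $\kkk$ whose action pushes $W$ close to a top singular direction of $A_\iii$ and away from its kernel direction, and a compactness argument in $\RP \times \RP$ makes the length uniform.

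With quasi-multiplicativity in hand, $\mu_K$ is built along classical lines. Form the weighted measures
\begin{equation*}
  \nu_n = \frac{1}{Z_n}\sum_{\iii \in J^n}\|A_\iii\|^s\,\tilde\delta_\iii, \qquad Z_n = \sum_{\iii \in J^n}\|A_\iii\|^s,
\end{equation*}
where $\tilde\delta_\iii$ denotes any probability measure supported on the cylinder $[\iii]$; average via $\frac{1}{N}\sum_{n=0}^{N-1}\sigma_\ast^n \nu_n$; and extract a weak-$\ast$ limit $\mu_K \in \MM_\sigma(J^\N)$. Sub-multiplicativity of $\|\cdot\|^s$ produces the upper Gibbs bound $\mu_K([\iii]) \le Ce^{-nP(\A,s)}\|A_\iii\|^s$, and the quasi-multiplicative lower bound above gives the matching lower estimate. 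Uniqueness and ergodicity then follow from the observation that any two $s$-Gibbs-type measures are uniformly comparable on every cylinder, so by the ergodic decomposition they must coincide with any of their ergodic components.

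The main obstacle is the irreducible but non-dominated case, where quasi-multiplicativity requires the construction of the bridging set $\UU$ rather than the one-line computation afforded by \eqref{eq:bochi-morris}. We expect to handle this step by invoking the corresponding results of K\"aenm\"aki and Feng-K\"aenm\"aki rather than reproducing their arguments in full, since the output in our range $0 < s \le 1$ is exactly the sub-additive Gibbs formalism those papers developed.
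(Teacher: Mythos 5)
Your proposal is correct in substance, and in the irreducible case it lands exactly where the paper does: the paper simply cites \cite[Proposition 1.2]{FengKaenmaki2011} for existence of the ergodic Gibbs-type measure, which is precisely the ``bridging word'' quasi-multiplicativity machinery you describe. In the dominated case, however, you take a genuinely different route. You upgrade \eqref{eq:bochi-morris} to the almost-multiplicativity $\|A_\iii A_\jjj\| \ge \kappa^2\|A_\iii\|\|A_\jjj\|$ (the same computation as in Lemma \ref{thm:joint-spectral-radius}) and then run the generic sub-additive construction: weighted empirical measures, averaging, a weak-$\ast$ limit, and the two Gibbs bounds from sub- and super-multiplicativity of $Z_n$. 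The paper instead uses domination to linearize the potential: fixing $V \in \CC_0$ it defines the H\"older continuous function $g(\iii) = \log\|A_{\iii|_1}|A_{\sigma\iii}V\|^s$, observes that its Birkhoff sums differ from $\log\|A_{\iii|_n}\|^s$ by at most $|\log\kappa^s|$, and invokes Bowen \cite[Theorems 1.7 and 1.16]{Bowen2008}. The paper's reduction buys existence, ergodicity, and the Gibbs property in one citation to the classical additive theory; your construction is more self-contained and treats both cases within a single sub-additive framework, at the cost of redoing the limit construction and the ergodicity argument by hand.

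One step of yours is stated too loosely: ``by the ergodic decomposition they must coincide with any of their ergodic components'' does not work as written, because an ergodic component of a Gibbs-type measure is not a priori Gibbs-type, so comparability gives you nothing about the components directly. The standard repair is to note that the Gibbs property together with quasi-multiplicativity makes $\mu_K$ quasi-Bernoulli, i.e.\ $\mu_K([\iii])\mu_K([\jjj]) \le C\sum_{\kkk}\mu_K([\iii\kkk\jjj])$ with $\kkk$ ranging over the finite bridging set (or $\kkk = \varnothing$ in the dominated case), which yields $\mu_K(E \cap F) \ge C^{-1}\mu_K(E)\mu_K(F)$ for invariant $E$ and arbitrary Borel $F$, hence ergodicity; uniqueness then follows, as in the paper, because two comparable ergodic measures cannot be mutually singular. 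With that repair the argument is complete.
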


\begin{proof}
  Recall first that, by Lemma \ref{thm:pressure-finite1}, the pressure $P(\A,s)$ is finite for all $0 < s \le 1$. If $\A$ is irreducible, then the existence of the claimed measure $\mu_K \in \MM_\sigma(J^\N)$ follows immediately from \cite[Proposition 1.2]{FengKaenmaki2011}. We may thus assume that $\A$ is dominated. Fix $0 < s \le 1$ and notice that, by \eqref{eq:bochi-morris}, there exist $\kappa > 0$ and a multicone $\CC_0 \subset \RP$ such that $\|A_\iii|V\| \ge \kappa\|A_\iii\|$ for all $V \in \CC_0$ and $\iii \in J^*$. Fixing $V \in \CC_0$, we see that
  \begin{equation*}
    \log\|A_{\iii|_n}\|^s + \log\kappa^s \le \sum_{k=0}^{n-1} \log\|A_{\sigma^k \iii|_1}|A_{\sigma\iii}V\|^s \le \log\|A_{\iii|_n}\|^s
  \end{equation*}
  for all $\iii \in J^\N$ and $n \in \N$. By \cite[Theorems 1.7 and 1.16]{Bowen2008}, there exist an ergodic measure $\mu_K \in \MM_\sigma(J^\N)$ and a constant $C \ge 1$ such that
  \begin{equation*}
    \kappa^sC^{-1} e^{-nP(\A,s)}\|A_\iii\|^s \le \mu_K([\iii]) \le Ce^{-nP(\A,s)}\|A_\iii\|^s
  \end{equation*}
  for all $\iii \in J^n$ and $n \in \N$; see also \cite[Lemma 2.12]{BaranyKaenmakiYu2021}. The uniqueness of $\mu_K$ is now evident as two different ergodic measures are mutually singular.
\end{proof}

If $\A = (A_i)_{i \in J} \in M_2(\R)^J$ is dominated, then it follows from \eqref{eq:bochi-morris} that $\|A_\iii\| \ge \kappa^{2(n-1)}\|A_{i_1}\| \cdots \|A_{i_n}\| \ge \kappa^{2(n-1)}\min_{i \in J}\|A_i\|^n > 0$ for all $\iii = i_1 \cdots i_n \in J^n$ and $n \in \N$. Hence the semigroup $\{A_\iii : \iii \in J^*\}$ does not contain rank zero matrices and, by Lemma \ref{thm:pressure-right-continuous-at-zero}, the function $s \mapsto P(\A,s)$ is right-continuous at $0$. Furthermore, if there are no rank zero matrices, then $\Sigma = J^\N$ and the $s$-Gibbs-type measure $\mu_K \in \MM_\sigma(J^\N)$ is fully supported on $J^\N$. If $\A$ is irreducible, then $\mu_K$ is supported only on $\Sigma$.

Given $\mu \in \MM_\sigma(J^\N)$ and $\A = (A_i)_{i \in J} \in M_2(\R)^J$, we define for each $s \ge 0$ the \emph{energy} by setting
\begin{equation*}
  \Lambda(\mu,\A,s) = \lim_{n \to \infty} \frac{1}{n} \sum_{\iii \in J^n} \mu([\iii]) \log \fii^s(A_\iii) = \inf_{n \in \N} \frac{1}{n} \sum_{\iii \in J^n} \mu([\iii]) \log \fii^s(A_\iii).
\end{equation*}
The limit above exists or is $-\infty$ again by Fekete's lemma. Recall that the \emph{entropy} of $\mu$ is
\begin{equation*}
  h(\mu) = -\lim_{n \to \infty} \frac{1}{n} \sum_{\iii \in J^n} \mu([\iii]) \log\mu([\iii]).
\end{equation*}
It is well-known that
\begin{equation} \label{eq:eq-state-ineq}
  P(\A,s) \ge h(\mu) + \Lambda(\mu,\A,s)
\end{equation}
for all $\mu \in \MM_\sigma(J^\N)$ and $s \ge 0$; for example, see \cite[\S 3]{KaenmakiVilppolainen2010}. A measure $\mu_K \in \MM_\sigma(J^\N)$ is an \emph{$s$-equilibrium state} if it satisfies
\begin{equation} \label{eq:eq-state-def}
  P(\A,s) = h(\mu_K) + \Lambda(\mu_K,\A,s) > -\infty.
\end{equation}
The following lemma shows the uniqueness of the equilibrium state in dominated and irreducible cases.

\begin{lemma} \label{thm:unique-equilibrium-state}
  If $\A = (A_i)_{i \in J} \in M_2(\R)^J$ satisfies $\max_{i \in J} \|A_i\| < 1$ and is dominated or irreducible, then for every $0 < s \le 1$ the ergodic $s$-Gibbs-type measure $\mu_K \in \MM_\sigma(J^\N)$ is the unique $s$-equilibrium state.
\end{lemma}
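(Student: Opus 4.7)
The strategy is to verify that $\mu_K$ is an $s$-equilibrium state directly from the Gibbs-type bounds, and then obtain uniqueness by reducing the dominated case to Bowen's classical theorem on H\"older potentials and appealing to \cite{FengKaenmaki2011} in the irreducible case.

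For existence, I apply $-\log$ to the two-sided Gibbs-type inequality characterizing $\mu_K$, weight by $\mu_K([\iii])$, sum over $\iii \in J^n$, and divide by $n$, obtaining the estimate
\begin{equation*}
  \left| -\tfrac{1}{n}\sum_{\iii \in J^n} \mu_K([\iii])\log\mu_K([\iii]) - P(\A,s) + \tfrac{s}{n}\sum_{\iii \in J^n} \mu_K([\iii])\log\|A_\iii\| \right| \le \tfrac{\log C}{n}.
\end{equation*}
Letting $n \to \infty$ and recalling that $\fii^s(A_\iii) = \|A_\iii\|^s$ for $0 < s \le 1$, one obtains $h(\mu_K) + \Lambda(\mu_K,\A,s) = P(\A,s)$; finiteness of $P(\A,s)$ via Lemma~\ref{thm:pressure-finite1} then certifies \eqref{eq:eq-state-def}.

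For uniqueness, let $\nu$ be any $s$-equilibrium state. First note that $\nu$ must be supported on $\Sigma$: the complement $J^\N \setminus \Sigma$ is a countable increasing union of cylinders $[\iii]$ with $A_\iii = 0$, so if $\nu(J^\N \setminus \Sigma) > 0$ then some such cylinder $[\iii_0]$ with $\iii_0 \in J^{n_0}$ carries positive $\nu$-mass, forcing the $n_0$-th sum in the definition of $\Lambda(\nu,\A,s)$ to equal $-\infty$ and contradicting \eqref{eq:eq-state-def}. In the dominated case $\Sigma = J^\N$, and the proof of Lemma~\ref{thm:weak-gibbs} provides a H\"older continuous potential $g(\iii) = \log\|A_{\iii|_1} | A_{\sigma\iii} V\|^s$ satisfying $\sum_{k=0}^{n-1} g(\sigma^k\iii) = \log\|A_{\iii|_n}\|^s + O(1)$ uniformly in $\iii$. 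Integrating against any $\mu \in \MM_\sigma(J^\N)$ and using $\sigma$-invariance yields $\Lambda(\mu,\A,s) = \int g \, d\mu$, so that the $s$-equilibrium states for $\A$ coincide exactly with the classical equilibrium states for $g$ on the full shift, and Bowen's theorem \cite[Theorems 1.7 and 1.16]{Bowen2008} delivers uniqueness.

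In the irreducible but not dominated case, I would instead invoke \cite[Proposition 1.2]{FengKaenmaki2011}, which provides a unique ergodic equilibrium state in this setting; combined with the Gibbs-type bounds from Lemma~\ref{thm:weak-gibbs} this forces coincidence with $\mu_K$. The main obstacle is precisely this irreducible case, where the absence of a strongly invariant multicone means that the potential $g$ above is not available as a globally H\"older function and Bowen's classical machinery does not directly apply; one must lean on the finer variational analysis of \cite{FengKaenmaki2011} to extract uniqueness, and additionally verify that the ergodic decomposition of a general equilibrium state consists of ergodic equilibrium states (which follows from affinity of both entropy and energy in $\mu$) before the ergodic uniqueness can be upgraded to full uniqueness.
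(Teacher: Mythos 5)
Your proposal is correct, and the existence half is essentially the paper's own argument: both compute $h(\mu_K)+\Lambda(\mu_K,\A,s)$ directly from the two-sided Gibbs-type bounds (summing over the cylinders of positive mass, i.e.\ over $\Sigma_n$) and use Lemma~\ref{thm:pressure-finite1} for finiteness. Where you genuinely diverge is the uniqueness step. The paper disposes of it in one line: since $\mu_K$ is ergodic and of Gibbs type, \cite[Theorem~3.6]{KaenmakiVilppolainen2010} applies, a general result saying that an ergodic Gibbs-type equilibrium state for a sub-multiplicative potential is automatically the unique equilibrium state; this treats the dominated and irreducible cases uniformly and requires no further work. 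You instead split into cases: in the dominated case you convert the sub-additive variational problem into an additive one for the H\"older potential $g$ from the proof of Lemma~\ref{thm:weak-gibbs} (using that $\Sigma=J^\N$ there and that the Birkhoff sums of $g$ track $\log\|A_{\iii|_n}\|^s$ up to a uniform constant, so $\Lambda(\mu,\A,s)=\int g\,d\mu$ for every invariant $\mu$) and then invoke Bowen's uniqueness theorem on the mixing full shift; in the irreducible case you fall back on the variational analysis of \cite{FengKaenmaki2011}. Your route is more explicit and conceptually transparent in the dominated case, at the cost of the extra bookkeeping you correctly flag (restricting equilibrium states to $\Sigma$, and upgrading ergodic uniqueness to full uniqueness via affinity of $h$ and $\Lambda$ under the ergodic decomposition), and of leaning on the deeper Feng--K\"aenm\"aki machinery in the irreducible case where no globally H\"older potential is available. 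Two small citation points: Bowen's uniqueness statement is Theorem~1.22 rather than Theorems~1.7 and~1.16 of \cite{Bowen2008} (the latter give existence of the Gibbs measure), and you should check that the uniqueness assertion you attribute to Proposition~1.2 of \cite{FengKaenmaki2011} is indeed stated there rather than in the surrounding theorems; neither issue affects the soundness of the argument.
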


\begin{proof}
  Fix $0 < s \le 1$ and let $\mu_K \in \MM_\sigma(J^\N)$ be the ergodic $s$-Gibbs-type measure. Since, by Lemmas \ref{thm:weak-gibbs} and \ref{thm:pressure-finite1},
  \begin{align*}
    h(\mu_K) + \Lambda(\mu_K,\A,s) &= \lim_{n \to \infty} \frac{1}{n} \sum_{\iii \in \Sigma_n} \mu_K([\iii]) \log \frac{\|A_\iii\|^s}{\mu_K([\iii])} \\ 
    &= \lim_{n \to \infty} \frac{1}{n} \sum_{\iii \in \Sigma_n} \mu_K([\iii]) \log e^{nP(\A,s)} = P(\A,s) > -\infty,
  \end{align*}
  we see that $\mu_K$ is an $s$-equilibrium state. As $\mu_K$ is ergodic, the uniqueness follows from \cite[Theorem 3.6]{KaenmakiVilppolainen2010}.
\end{proof}

If $\A = (A_i)_{i \in J} \in M_2(\R)^J$ contains an invertible matrix, then $I \ne \emptyset$ and, by Lemma \ref{thm:pressure-finite2}, $P(I^\N,\A,s) > -\infty$ for all $s \ge 0$. In this case, regardless of domination and irreducibility, it follows from \cite[Theorem 4.1]{Kaenmaki2004} that for every $s > 0$ there exists an ergodic measure $\nu_K \in \MM_\sigma(J^\N)$ supported on $I^\N$ such that
\begin{equation} \label{eq:equilibrium-state}
  P(I^\N,\A,s) = h(\nu_K) + \Lambda(\nu_K,\A,s).
\end{equation}
Note that such a measure is not necessarily unique; see \cite{FengKaenmaki2011,KaenmakiVilppolainen2010,KaenmakiMorris2018,BochiMorris2018}.

\begin{lemma} \label{thm:pressure-drop}
  If $\A = (A_i)_{i \in J} \in M_2(\R)^J$ satisfies $\max_{i \in J} \|A_i\| < 1$, contains a rank one matrix, and is dominated or irreducible, then
  \begin{equation*}
    P(I^\N,\A,s) < P(\A,s)
  \end{equation*}
  for all $0 \le s \le 1$.
\end{lemma}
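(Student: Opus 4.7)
The strategy hinges on the uniqueness of the $s$-equilibrium state (Lemma \ref{thm:unique-equilibrium-state}) combined with the variational inequality \eqref{eq:eq-state-ineq}. I would dispose of the trivial cases first and then argue by contradiction via uniqueness.

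For $s=0$ the inequality is immediate: Lemma \ref{thm:pressure-continuous} gives $P(\A,0)=\log\#J$, whereas the same computation yields $P(I^\N,\A,0)=\log\#I$ (interpreted as $-\infty$ when $I=\emptyset$), and the presence of a rank one matrix forces $\#I<\#J$. For $0<s\le 1$ I first handle the case $I=\emptyset$: then $P(I^\N,\A,s)=-\infty$ while $P(\A,s)>-\infty$ by Lemma \ref{thm:pressure-finite1} (since $\A$ is dominated or irreducible). From now on assume $I\ne\emptyset$.

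The key step is the following claim: if $\mu_K$ denotes the ergodic $s$-Gibbs-type measure provided by Lemma \ref{thm:weak-gibbs}, then $\mu_K(I^\N)=0$. To see this, pick $j\in J\setminus I$ with $A_j$ of rank one; then $\|A_j\|=\alpha_1(A_j)>0$, and the lower Gibbs bound yields $\mu_K([j])\ge C^{-1}e^{-P(\A,s)}\|A_j\|^s>0$. Any $\iii\in I^\N$ satisfies $i_k\in I$ for every $k$, hence $\sigma^k\iii\notin[j]$ for all $k\ge 0$; but Birkhoff's ergodic theorem applied to the ergodic measure $\mu_K$ forces $\mu_K$-a.e.\ sequence to visit $[j]$, so $\mu_K(I^\N)=0$. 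This is the place where the rank one hypothesis is genuinely used, and it is the step I expect to be the main obstacle to state cleanly—everything after it is a bookkeeping exercise.

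To conclude, recall from the discussion following Lemma \ref{thm:unique-equilibrium-state} (relying on Lemma \ref{thm:pressure-finite2} to ensure $P(I^\N,\A,s)>-\infty$ and the existence result from \cite{Kaenmaki2004}) that there exists an ergodic $\nu_K\in\MM_\sigma(J^\N)$ supported on $I^\N$ with $h(\nu_K)+\Lambda(\nu_K,\A,s)=P(I^\N,\A,s)$. The variational inequality \eqref{eq:eq-state-ineq} gives $P(\A,s)\ge h(\nu_K)+\Lambda(\nu_K,\A,s)=P(I^\N,\A,s)$. If equality held, $\nu_K$ would itself be an $s$-equilibrium state for $\A$, so by the uniqueness part of Lemma \ref{thm:unique-equilibrium-state} we would have $\nu_K=\mu_K$. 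This contradicts $\nu_K(I^\N)=1$ together with $\mu_K(I^\N)=0$, so the inequality is strict.
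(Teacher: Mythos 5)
Your proof is correct and follows essentially the same route as the paper: uniqueness of the $s$-equilibrium state $\mu_K$, existence of an equilibrium state $\nu_K$ for $P(I^\N,\A,s)$ supported on $I^\N$, and the observation that the Gibbs lower bound forces $\mu_K$ to charge the rank one cylinder. Your Birkhoff detour to show $\mu_K(I^\N)=0$ is valid but unnecessary — the paper simply notes $\mu_K([j])>0=\nu_K([j])$, which already shows $\nu_K\ne\mu_K$ and hence, by uniqueness, that $\nu_K$ fails to attain $P(\A,s)$.
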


\begin{proof}
  Since $\A$ is dominated or irreducible, Lemma \ref{thm:pressure-finite1} shows that $P(\A,s) > -\infty$ for all $0 \le s \le 1$. Notice that, by Lemma \ref{thm:pressure-continuous}, $P(I^\N,\A,0) = \log\# I < \log\# J = P(\A,0)$ and we may fix $0 < s \le 1$. Therefore, by Lemma \ref{thm:unique-equilibrium-state}, there exists unique $\mu_K \in \MM_\sigma(J^\N)$ such that
  \begin{equation} \label{eq:equilibrium-large}
    P(\A,s) = h(\mu_K) + \Lambda(\mu_K,\A,s) > -\infty.
  \end{equation}
  Furthermore, by Lemma \ref{thm:weak-gibbs}, $\mu_K$ satisfies
  \begin{equation*}
    \mu_K([\iii]) \ge C^{-1}e^{-nP(\A,s)} \|A_\iii\|^s > 0
  \end{equation*}
  for all $\iii \in \Sigma_n$ and $n \in \N$, where $C \ge 1$ is a constant. In particular, if $A_k$ is a rank one matrix in $\A$, then $\mu_K([k]) > 0$.

  If $\A$ does not contain invertible matrices, then trivially $P(I^\N,\A,s) = -\infty$ for all $s > 0$ and there is nothing to prove. We may thus assume that $\A$ contains an invertible matrix. Therefore, by \eqref{eq:equilibrium-state}, there exists a measure $\nu_K \in \MM_\sigma(J^\N)$ supported on $I^\N$ such that
  \begin{equation} \label{eq:equilibrium-small}
    P(I^\N,\A,s) = h(\nu_K) + \Lambda(\nu_K,\A,s).
  \end{equation}
  Since $A_k$ is not invertible and $\nu_K$ is supported on $I^\N$, we have $\nu_K([k]) = 0$. As $\mu_K$ is the unique measure in $\MM_\sigma(J^\N)$ satisfying \eqref{eq:equilibrium-large} and $\mu_K([k]) > \nu_K([k])$, we see that $\nu_K$ does not satisfy \eqref{eq:equilibrium-large} and therefore, by \eqref{eq:equilibrium-small},
  \begin{equation*}
    P(\A,s) > h(\nu_K) + \Lambda(\nu_K,\A,s) = P(I^\N,\A,s)
  \end{equation*}
  as claimed.
\end{proof}

The following lemma characterizes the continuity of the function $s \mapsto P(\A,s)$ at $1$.

\begin{lemma} \label{thm:pressure-continuous-at-one}
  If $\A = (A_i)_{i \in J} \in M_2(\R)^J$ satisfies $\max_{i \in J} \|A_i\| < 1$ and is dominated or irreducible, then the function $s \mapsto P(\A,s)$ is continuous at $1$ if and only if $\A$ does not contain rank one matrices.
\end{lemma}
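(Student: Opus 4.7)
My plan is to locate the possible discontinuity and identify it with the strict pressure drop provided by Lemma \ref{thm:pressure-drop}.

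First, I would observe that $s \mapsto P(\A,s)$ is automatically left-continuous at $s=1$. Indeed, by Lemma \ref{thm:pressure-continuous} and the definition of the pressure, for $s \in (0,1]$ we have
\begin{equation*}
  P(\A,s) = \inf_{n \in \N} \frac{1}{n} \log \sum_{\iii \in \Sigma_n} \|A_\iii\|^s,
\end{equation*}
which is an infimum of functions continuous in $s$ and therefore upper semicontinuous. Combined with strict monotonicity (so $P(\A,s) \ge P(\A,1)$ for $s < 1$), upper semicontinuity forces $\lim_{s \uparrow 1} P(\A,s) = P(\A,1)$. Hence only right-continuity is in question.

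Second, I would compute $\lim_{s \downarrow 1} P(\A,s) = P(I^\N,\A,1)$. By Lemma \ref{thm:pressure-continuous}, $P(\A,s)=P(I^\N,\A,s)$ for $s>1$. If $I=\emptyset$, both sides are $-\infty$. Otherwise, set $c = \min_{i \in I}\alpha_2(A_i) > 0$. Using $\alpha_2(AB) \ge \alpha_2(A)\alpha_2(B)$, for every $\iii \in I^n$ we obtain $c^n \le \alpha_2(A_\iii) \le \|A_\iii\| \le 1$, so for $s>1$,
\begin{equation*}
  c^{n(s-1)}\|A_\iii\| \le \fii^s(A_\iii) = \|A_\iii\|\alpha_2(A_\iii)^{s-1} \le \|A_\iii\|.
\end{equation*}
Summing, dividing by $n$, taking a log, letting $n \to \infty$, and finally letting $s \downarrow 1$ sandwiches $P(I^\N,\A,s)$ between $(s-1)\log c + P(I^\N,\A,1)$ and $P(I^\N,\A,1)$, yielding the desired right-continuity of $P(I^\N,\A,\cdot)$ at $1$.

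Finally, I would combine these facts: continuity of $s \mapsto P(\A,s)$ at $1$ is equivalent to $P(I^\N,\A,1)=P(\A,1)$. If $\A$ contains a rank one matrix, Lemma \ref{thm:pressure-drop} gives the strict inequality $P(I^\N,\A,1) < P(\A,1)$, so continuity fails. Conversely, if $\A$ has no rank one matrix, then every $A_i$ is zero or invertible; since a product of such matrices is nonzero precisely when every factor is invertible, we have $\Sigma = I^\N$, and Lemma \ref{thm:pressure-continuous} then gives $P(I^\N,\A,1) = P(\Sigma,\A,1) = P(\A,1)$, so continuity holds. The only non-routine step is the explicit $\alpha_2$-estimate establishing right-continuity of $P(I^\N,\A,\cdot)$ at $1$; after that, Lemma \ref{thm:pressure-drop} does all the work.
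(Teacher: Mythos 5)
Your proof is correct and follows essentially the same route as the paper: both reduce the question to whether $P(I^\N,\A,1)=P(\A,1)$ and invoke Lemma \ref{thm:pressure-drop} for the strict drop when a rank one matrix is present, while the absence of rank one matrices is handled by observing that only invertible and zero matrices remain (so $\Sigma=I^\N$). The only differences are cosmetic: you supply direct arguments (upper semicontinuity of the infimum for left-continuity at $1$, and the $\alpha_2$-estimate for the right limit) where the paper cites \cite[Lemma 2.1]{KaenmakiVilppolainen2010}, and you absorb the $I=\emptyset$ case into the general statement rather than treating it separately as the paper does.
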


\begin{proof}
  If $\A$ does not contain rank one matrices, then it contains only invertible or rank zero matrices. By Lemma \ref{thm:pressure-continuous}, rank zero matrices do not have any effect on the value of the pressure $P(\A,s)$ when $s > 0$. Therefore, rank zero matrices have no impact on the continuity at $1$ and we may assume that $\A \in GL_2(\R)^J$. But in this case, the continuity follows from \cite[Lemma 2.1]{KaenmakiVilppolainen2010}.

  Let us then assume that $\A$ contains a rank one matrix. If $\A$ does not contain invertible matrices, then, as the function $s \mapsto P(\A,s)$ is strictly decreasing, Lemma \ref{thm:pressure-finite2} implies that $P(\A,s) = -\infty$ for all $s > 1$. Furthermore, since $\A$ is dominated or irreducible, Lemma \ref{thm:pressure-finite1} shows that $P(\A,s) > -\infty$ for all $0 \le s \le 1$ and the function $s \mapsto P(\A,s)$ is discontinuous at $1$. We may thus assume that $\A$ contains an invertible matrix. By Lemma \ref{thm:pressure-finite2}, we thus have $P(I^\N,\A,s) > -\infty$ for all $s \ge 0$. Recall that, by \cite[Lemma 2.1]{KaenmakiVilppolainen2010}, the function $s \mapsto P(I^\N,\A,s)$ is continuous at $1$. Therefore, by Lemma \ref{thm:pressure-continuous}, showing
  \begin{equation*}
    P(I^\N,\A,1) < P(\A,1)
  \end{equation*}
  proves the function $s \mapsto P(\A,s)$ discontinuous at $1$. But, as $\A$ contains a rank one matrix, this follows immediately from Lemma \ref{thm:pressure-drop}.
\end{proof}

\section{Dimension of non-invertible self-affine sets} \label{sec:dim-results}

Recall that $J$ is a finite set and the affine iterated function system is a tuple $(f_i)_{i \in J}$ of contractive affine self-maps on $\R^2$ not having a common fixed point. We write $f_i = A_i+v_i$ for all $i \in J$, where $A_i \in M_2(\R)$ and $v_i \in \R^2$, and $f_\iii = f_{i_1} \circ \cdots \circ f_{i_n}$ for all $\iii = i_1 \cdots i_n \in J^n$ and $n \in \N$. We let $f_\varnothing = \mathrm{Id}$ to be the identity map. Note that the associated tuple of matrices $(A_i)_{i \in J}$ is an element of $M_2(\R)^J$ and satisfies $\max_{i \in J}\|A_i\|<1$.

If $I = \{i \in J : A_i \text{ is invertible}\}$ is non-empty, then the invertible self-affine set $X$ is associated to $(f_i)_{i \in I}$, and if $J \setminus I$ is non-empty, then the non-invertible self-affine set $X'$ is associated to $(f_i)_{i \in J}$. Recall the defining property \eqref{eq:self-affine-set-def} of a self-affine set. We use the convention that whenever we speak about a self-affine set, then it is automatically accompanied with a tuple of affine maps which defines it. This makes it possible to write that e.g.\ ``a non-invertible self-affine set is dominated'' which obviously then means that ``the associated tuple $\A = (A_i)_{i \in J}$ of matrices in $M_2(\R)^J$ is dominated''.

The study of non-invertible self-affine sets is connected to the theory of sub-self-affine sets. If the \emph{canonical projection} $\pi \colon J^\N \to \R^2$ is defined such that
\begin{equation*}
  \pi(\iii) = \lim_{n \to \infty} f_{\iii|_n}(0) = \lim_{n \to \infty} \sum_{k=1}^n A_{\iii|_{k-1}}v_{i_k}
\end{equation*}
for all $\iii = i_1i_2\cdots \in J^\N$, then we write $X'' = \pi(\Sigma)$, where $\Sigma = \{\iii \in J^\N : A_{\iii|_n}$ is non-zero for all $n \in \N\}$. Observe that $X = \pi(I^\N) \subset X'' \subset \pi(J^\N) = X'$ and, as $\sigma(\Sigma) \subset \Sigma$, the set $X''$ is \emph{sub-self-affine}, i.e.\
\begin{equation} \label{eq:sub-self-affine}
  X'' \subset \bigcup_{i \in J} f_i(X'');
\end{equation}
see \cite{KaenmakiVilppolainen2010}. The study is also connected to inhomogeneous self-affine sets. If $C \subset \R^2$ is compact, then there exists a unique non-empty compact set $X_C \subset \R^2$ such that
\begin{equation*}
  X_C = \bigcup_{i \in I} f_i(X_C) \cup C.
\end{equation*}
The set $X_C$ is called the \emph{inhomogeneous self-affine set} with condensation $C$. Such sets were introduced by Barnsley and Demko \cite{BarnsleyDemko1985} and they have been studied for example in \cite{Barnsley2006,KaenmakiLehrback2017,Burrell2019,BurrellFraser2020,BakerFraserMathe2019}. Note that $X_\emptyset$ is the invertible self-affine set $X$.

\begin{lemma} \label{thm:inhomog}
  If $X'$ and $X$ are non-invertible and invertible planar self-affine sets, respectively, and $X''$ is the associated sub-self-affine set defined in \eqref{eq:sub-self-affine}, then $X' \setminus X''$ is countable and
  \begin{equation*}
    X' = X_C = X \cup \bigcup_{\iii \in I^*} f_\iii(C),
  \end{equation*}
  where $X_C$ is the inhomogeneous self-affine set with condensation $C = \bigcup_{i \in J \setminus I} f_i(X')$.
\end{lemma}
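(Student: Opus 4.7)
The plan is to prove the three assertions of the lemma separately. First, I would verify $X'=X_C$ by a Hutchinson--Barnsley fixed point argument. Splitting the self-similarity identity $X'=\bigcup_{i\in J} f_i(X')$ according to invertibility yields $X'=\bigcup_{i\in I}f_i(X')\cup C$. Since $(f_i)_{i\in I}$ is a finite collection of strict contractions and $C=\bigcup_{i\in J\setminus I}f_i(X')$ is a non-empty compact subset of $\R^2$, the operator $K\mapsto \bigcup_{i\in I}f_i(K)\cup C$ is a contraction on the space of non-empty compact subsets of $\R^2$ with the Hausdorff metric, so it has a unique compact fixed point, namely $X_C$. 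As $X'$ is such a fixed point, $X'=X_C$.

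For the explicit decomposition $X'=X\cup\bigcup_{\iii\in I^*} f_\iii(C)$, I would proceed by double inclusion, using the convention $\varnothing\in I^*$ with $f_\varnothing=\mathrm{Id}$. The inclusion $\supset$ is immediate: $X=\pi(I^\N)\subset X'$, $C\subset X'$, and iterating $\bigcup_{i\in J}f_i(X')=X'$ gives $f_\iii(X')\subset X'$ for every $\iii\in J^*$, hence $f_\iii(C)\subset X'$ for every $\iii\in I^*$. For the reverse inclusion, take $x=\pi(\iii)$ with $\iii\in J^\N$. If $\iii\in I^\N$, then $x\in X$. Otherwise let $n=\min\{k\in\N:i_k\in J\setminus I\}$ and set $\kkk=i_1\cdots i_{n-1}\in I^*$. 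A direct computation from the definition of $\pi$ gives $\pi(\iii)=f_\kkk(f_{i_n}(\pi(\sigma^n\iii)))$, and since $\pi(\sigma^n\iii)\in X'$ and $i_n\in J\setminus I$, the point $f_{i_n}(\pi(\sigma^n\iii))$ lies in $C$, so $x\in f_\kkk(C)$.

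For the countability of $X'\setminus X''$, the key observation is that $X'\setminus X''=\pi(J^\N)\setminus\pi(\Sigma)\subset\pi(J^\N\setminus\Sigma)$, so it suffices to show that $\pi(J^\N\setminus\Sigma)$ is countable. Given $\iii\in J^\N\setminus\Sigma$, let $n$ be the smallest integer with $A_{\iii|_n}=0$ and write $f_{\iii|_n}(x)=A_{\iii|_n}x+b_\iii$; the map $f_{\iii|_n}$ is then constant, so $\pi(\iii)=\lim_{m\to\infty}f_{\iii|_m}(0)=f_{\iii|_n}(0)=b_\iii$ depends only on the prefix $\iii|_n\in J^*$. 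Since $J^*$ is countable, the image $\pi(J^\N\setminus\Sigma)$ is countable as well.

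I do not anticipate any deep obstacle; the argument is essentially finitary combinatorics on $J^\N$ combined with the standard Banach fixed point theorem in the Hausdorff metric. The only subtle point is the notational bookkeeping in the second step when the first non-invertible symbol appears at position $n=1$, so that the prefix $\kkk$ is empty and $f_\kkk$ is the identity; with that convention in place, the three claims reduce to the routine verifications sketched above.
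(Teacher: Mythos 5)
Your proposal is correct, and all three claims are established soundly; the difference from the paper lies in how the identity $X'=X_C$ and the orbit decomposition are obtained. The paper imports the identity $X_C = X \cup \bigcup_{\iii \in I^*} f_\iii(C)$ from Snigireva's thesis and then proves $X'=X_C$ by a double inclusion built on top of that cited identity. You instead observe that $X'=\bigcup_{i\in I}f_i(X')\cup C$ exhibits $X'$ as a fixed point of the inhomogeneous Hutchinson operator, so $X'=X_C$ follows at once from uniqueness of the fixed point in the Hausdorff metric, and you then reprove the decomposition $X'=X\cup\bigcup_{\iii\in I^*}f_\iii(C)$ from scratch via the coding map, splitting $\iii\in J^\N$ at the first non-invertible symbol and using $\pi(\iii)=f_{\iii|_m}(\pi(\sigma^m\iii))$. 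This makes your argument self-contained where the paper relies on an external reference, and the fixed-point route to $X'=X_C$ is arguably cleaner since it does not need the decomposition first. For the countability of $X'\setminus X''$ the two arguments are essentially the same: both note that $\pi(\iii)$ for $\iii\notin\Sigma$ equals the translation part of $f_{\iii|_{n}}$ for the shortest prefix with $A_{\iii|_n}=0$ and hence depends only on a word in the countable set $J^*$; your phrasing avoids the paper's detour through a compactness/separability covering of $J^\N\setminus\Sigma$ but carries identical content. The only point worth making explicit in a final write-up is the standard estimate $d_H(A\cup C,B\cup C)\le d_H(A,B)$ that justifies calling the inhomogeneous operator a contraction.
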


\begin{proof}
  Let us first show that $X' \setminus X''$ is countable. Writing $v_\iii = \sum_{k=1}^n A_{\iii|_{k-1}}v_{i_k}$, we see that $f_\iii = A_\iii + v_\iii$ for all $\iii = i_1 \cdots i_n \in J^n$ and $n \in \N$. Let $\iii \in J^\N \setminus \Sigma$ and choose $n_0(\iii) = \min\{n \in \N : A_{\iii|_n}$ is zero$\}$. Since $v_{\iii|_{n+1}} = \sum_{k=1}^{n+1} A_{\iii|_{k-1}}v_{i_k} = A_{\iii|_n} v_{i_{n+1}} + \sum_{k=1}^n A_{\iii|_{k-1}}v_{i_k} = v_{\iii|_n}$ for all $n \ge n_0(\iii)$, a simple induction shows that
  \begin{equation*}
    f_{\iii|_n}(X') = \{v_{\iii|_{n_0(\iii)}}\}
  \end{equation*}
  for all $n \ge n_0(\iii)$. As $J^\N \setminus \Sigma$ is clearly separable, there exist countably many infinite words $\iii_1,\iii_2,\ldots \in J^\N \setminus \Sigma$ such that $J^\N \setminus \Sigma \subset \bigcup_{k \in \N} [\iii_k|_{n_0(\iii_k)}]$. It follows that
  \begin{equation*}
    X' \setminus X'' \subset \{v_{\iii_k|_{n_0(\iii_k)}} : k \in \N\}
  \end{equation*}
  is countable.

  Let us then prove the claimed equalities. Noting that the argument of \cite[Lemma 3.9]{Snigireva2008} works also in the self-affine setting, we have
  \begin{equation} \label{eq:inhomog-eq}
    X_C = X \cup \bigcup_{\iii \in I^*} f_\iii(C).
  \end{equation}
  To prove the remaining equality, let us first show that $X' \subset X_C$. To that end, fix $x \in X'$. By \eqref{eq:inhomog-eq}, we have $X \subset X_C$ and we may assume that $x \in X' \setminus X$. But this implies that there exist $\iii \in I^*$ and $i \in J \setminus I$ such that $x \in f_{\iii i}(X')$. Since, again by \eqref{eq:inhomog-eq},
  \begin{equation*}
    f_{\iii i}(X') \subset f_\iii(C) \subset \bigcup_{\iii \in I^*} f_\iii(C) \subset X_C
  \end{equation*}
  we have shown that $X' \subset X_C$. The inclusion $X_C \subset X'$ follows immediately from \eqref{eq:inhomog-eq} since we trivially have $X \subset X'$ and $f_\iii(C) \subset X'$ for all $\iii \in I^*$. Thus $X' = X_C$ as claimed.
\end{proof}

We are interested in the dimension of the non-invertible self-affine set. Relying on \eqref{eq:self-affine-set-def}, the non-invertible self-affine set $X'$ can naturally be covered by the sets $f_\iii(B)$, where $B$ is a ball containing $X'$. Note that such sets are ellipses or line segments, depending on whether the associated matrix is invertible or has rank one. Each set $f_\iii(B)$ can be covered by one ball of radius $\alpha_1(A_\iii)\diam(B)$ or by $\alpha_1(A_\iii)/\alpha_2(A_\iii)$ many balls of radius $\alpha_2(A_\iii)\diam(B)$. This motivates us to study the limiting behavior of sums $\sum_{\iii \in J^n} \fii^s(A_\iii)$ and hence, the pressure $P(\A,s)$.

Recall that the upper Minkowski dimension $\udimm$ is an upper bound for the Hausdorff dimension $\dimh$ for all compact sets; see \cite[\S 5.3]{Mattila1995}. The following lemma, generalizing \cite[Theorem 5.4]{Falconer1988}, shows that the affinity dimension is an upper bound for the upper Minkowski dimension for all non-invertible self-affine sets.

\begin{lemma} \label{thm:affinity-upper}
  If $X'$ is a planar self-affine set, then
  \begin{equation*}
    \udimm(X') \le \dimaff(\A).
  \end{equation*}
\end{lemma}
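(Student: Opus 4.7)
The plan is to prove $\udimm(X') \le s$ for every $s > \dimaff(\A)$; letting $s$ decrease to $\dimaff(\A)$ then yields the lemma. Fix a closed ball $B$ of radius $r_0$ containing $X'$. Iterating \eqref{eq:self-affine-set-def} gives $X' \subset \bigcup_{\iii \in J^n} f_\iii(B)$ for every $n \in \N$, and each $f_\iii(B)$ is contained in a ball of radius $\alpha_1(A_\iii) r_0$. For $\delta > 0$ I form the stopping-time antichain $\Gamma_\delta \subset J^*$ of shortest words $\iii$ with $\alpha_1(A_\iii) r_0 \le \delta$; since $\max_i \|A_i\| < 1$, this antichain is finite and covers $J^\N$, so the Minkowski counting function satisfies $N(X', \delta) \le \#\Gamma_\delta$, and it suffices to show $\#\Gamma_\delta \le C\delta^{-s}$.

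Because $s > \dimaff(\A)$ and the pressure is strictly decreasing where finite (Lemmas \ref{thm:pressure-continuous}--\ref{thm:pressure-finite2}), there exist $\eta > 0$ and $N_0 \in \N$ with $\sum_{\iii \in J^n}\fii^s(A_\iii) \le e^{-\eta n}$ for all $n \ge N_0$, whence $\sum_{\iii \in \Gamma_\delta}\fii^s(A_\iii) \le C_0$ uniformly in $\delta$ by sub-additivity. To convert this into the counting bound I split $\Gamma_\delta$ by the type of the terminal matrix. For $\iii$ with invertible terminal letter $j$, the estimate $\alpha_1(A_\iii) \ge \alpha_2(A_j)\alpha_1(A_{\iii|_{|\iii|-1}}) \ge c_0 \delta/r_0$, with $c_0 = \min\{\alpha_2(A_k) : k \in I\} > 0$, together with the stopping-time choice, furnishes a uniform lower bound $\fii^s(A_\iii) \gtrsim \delta^s$ (the case $s \in (1,2]$ uses a parallel stopping time based on $\alpha_2$), so summing yields an invertible contribution $\le C_1\delta^{-s}$.

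For $\iii$ with non-invertible terminal letter, factor $\iii = \hhh\kkk$ where $\hhh$ is the shortest prefix with $A_\hhh \notin GL_2(\R)$. By Lemma \ref{thm:rank-one1} write $A_\hhh = vw^\top$, so $\alpha_1(A_{\hhh\kkk}) = |v|\,|A_\kkk^\top w|$, and the enumeration of $\kkk$ with $\hhh\kkk \in \Gamma_\delta$ reduces to a one-dimensional stopping-time problem on the scalar sequence $|A_\kkk^\top w|$, which (because no $\alpha_2$ factor is involved in one dimension) yields a polynomial bound of the form $C_2^{(\hhh)}\delta^{-s}$. Summing over the finite collection of rank-drop prefixes $\hhh$---themselves forming an antichain in the invertible subtree, and hence controlled by the invertible bound already established applied to $\{\hhh|_{|\hhh|-1}\}$---gives a non-invertible contribution $\le C_3\delta^{-s}$. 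Adding the two contributions yields $\#\Gamma_\delta \le C\delta^{-s}$.

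The principal obstacle is the rank-drop step: $\fii^s$ vanishes on rank-one products when $s > 1$, so the sub-additive pressure sum does not directly bound the one-dimensional tails below a rank-drop prefix. This is overcome by combining the invertible-ancestor pressure sum with the one-dimensional sum $\sum_\kkk |A_\kkk^\top w|^s \le |w|^s \sum_\kkk \|A_\kkk\|^s$, which remains finite by the pressure estimate at exponent $\min(s, 1)$ (indeed this is the reason the argument only needs the pressure to decay at this exponent when $s > 1$), producing the required uniform bound and completing the proof.
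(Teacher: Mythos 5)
Your argument for $0<s\le 1$ is essentially sound (the bound $\sum_{\iii\in\Gamma_\delta}\fii^s(A_\iii)\le C_0$ follows because $\Gamma_\delta\subset J^*$ and $\sum_{\iii\in J^*}\fii^s(A_\iii)<\infty$, not from sub-additivity, and the one-dimensional stopping time inherits the same ``sudden drop'' defect when a further singular letter is appended --- but both points are repairable, e.g.\ by counting parents: every $\iii\in\Gamma_\delta$ is a child of some $\jjj$ with $\alpha_1(A_\jjj)r_0>\delta$, and there are at most $(r_0/\delta)^s\sum_{\jjj\in J^*}\alpha_1(A_\jjj)^s$ such $\jjj$). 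The genuine gap is the case $1<s\le 2$, which is the only case that matters when $\dimaff(\A)\ge 1$ (as in Theorem~\ref{thm:main}(1)), and there the strategy fails at three points. First, the target bound $\#\Gamma_\delta\le C\delta^{-s}$ on the cardinality of the $\alpha_1$-based antichain is simply false in general: take $A_1=\tfrac12\mathrm{Id}$ and $A_2=\dots=A_5=\tfrac12 e_1e_1^\top$; then $\dimaff(\A)=1$ while $\#\Gamma_\delta\ge 4^{\log_2(1/\delta)}=\delta^{-2}$, so no cardinality bound of the form $\delta^{-s}$ with $s<2$ can hold. One must instead exploit that the many words below a rank drop all index subsets of a single short segment. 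Second, your ``invertible terminal letter'' branch claims $\fii^s(A_\iii)\gtrsim\delta^s$, but for $s>1$ this fails for every word with an interior rank drop and invertible last letter, since then $\fii^s(A_\iii)=0$; the parenthetical $\alpha_2$-based stopping time does not repair this, because it stops at the first singular letter and the stopped piece is then a segment of macroscopic length, destroying the covering estimate $N(X',\delta)\le\#\Gamma_\delta$. Third, the summability $\sum_{\kkk\in J^*}\|A_\kkk\|^s<\infty$ that you invoke ``at exponent $\min(s,1)$'' requires $P(\A,1)<0$, i.e.\ $\dimaff(\A)<1$ --- precisely the case in which you would never need $s>1$. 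When $\dimaff(\A)\ge 1$ one has $P(\A,1)\ge 0$, so $\sum_{\kkk\in J^n}\|A_\kkk\|$ need not decay, and $P(\A,s)<0$ for $s>1$ only controls $\sum_{\iii\in I^n}\alpha_1(A_\iii)\alpha_2(A_\iii)^{s-1}$, which says nothing about the scalar tails below a rank-one prefix.

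The paper itself only cites a ``slight modification'' of Falconer's covering argument, and the modification that actually works routes everything through invertible words, where the negative pressure is genuinely available. Using the decomposition of Lemma~\ref{thm:inhomog}, $X'=X\cup\bigcup_{\iii\in I^*}f_\iii(C)$ with $C$ a finite union of segments and points of length $\lesssim r_0$, one covers $X$ by the classical Falconer argument for the invertible subsystem, and covers each segment in $f_\iii(C)$ by at most $\alpha_1(A_\iii)r_0/\delta+1$ balls of radius $\delta$. For $\iii\in I^*$ with $\alpha_2(A_\iii)\ge\delta/r_0$ and $1\le s\le 2$ one has $\alpha_1(A_\iii)/\delta\le r_0^{s-1}\delta^{-s}\alpha_1(A_\iii)\alpha_2(A_\iii)^{s-1}=r_0^{s-1}\delta^{-s}\fii^s(A_\iii)$, so the segment cost is absorbed into $\delta^{-s}\sum_{\iii\in I^*}\fii^s(A_\iii)<\infty$; words below the $\alpha_2$-stopping level contribute only the pieces already inside the stopped ellipses. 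This avoids ever summing $\|A_\kkk\|^s$ over all of $J^*$ and avoids counting the antichain $\Gamma_\delta$, which, as the example above shows, cannot be done.
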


\begin{proof}
  We may assume that $\dimaff(\A) < 2$ as otherwise there is nothing to prove. Let $k \in \{0,1\}$ be such that $k \le \dimaff(\A) < k+1$. Fix $\dimaff(\A) < s < k+1$ and notice that $P(\A,s) < 0$. By \cite[Proposition 4.1]{Falconer1988}, we thus have
  \begin{equation} \label{eq:star-sum-finite}
    M = \sum_{\jjj \in J^*} \fii^s(A_\jjj) < \infty.
  \end{equation}
  Let $B$ be a ball containing $X'$. By scaling and translating, we may assume that $B$ is the unit ball. Write
  \begin{equation*}
    \CC_r = \{\iii \in J^* : \alpha_{k+1}(A_\iii) \le r < \alpha_{k+1}(A_{\iii^-})\}
  \end{equation*}
  for all $0<r<1$. If $\jjj \in J^\N$, then $\alpha_{k+1}(A_{\jjj|_0}) = \alpha_{k+1}(\mathrm{Id}) = 1$ and $\alpha_{k+1}(A_{\jjj|_n}) \to 0$ as $n \to \infty$. Therefore, for each $0<r<1$ there exists unique $n \in \N$ such that $\jjj|_n \in \CC_r$ and the collection $\{[\iii] : \iii \in \CC_r\}$ of pairwise disjoint cylinder sets is a cover of $J^\N$.

  Fix $0<r<1$ and $\iii \in \CC_r$, and observe that $f_\iii(B)$ is an ellipse with semi-axes $\alpha_1(A_\iii)$ and $\alpha_2(A_\iii)$. Since $\alpha_{k+1}(A_\iii) \le r < \alpha_{k+1}(A_{\iii^-})$, the set $f_\iii(B)$ is covered by
  \begin{equation*}
    \begin{cases}
      4, &\text{if } k=0, \\ 
      4\max\{r^{-1}\alpha_1(A_\iii),1\}, &\text{if } k=1
    \end{cases}
  \end{equation*}
  many balls of radius $r$. Notice that $\max\{r^{-1}\alpha_1(A_\iii),1\} \le r^{-1}\alpha_1(A_{\iii^-})$ and hence, $f_\iii(B)$ can be covered by $4\fii^k(A_{\iii^-})r^{-k}$ many balls of radius $r$. Write
  \begin{equation*}
    N_\iii(r) = 4\fii^k(A_{\iii^-})r^{-k}
  \end{equation*}
  and observe that
  \begin{equation*}
    N_\iii(r)r^s = 4\fii^k(A_{\iii^-})r^{s-k} \le 4\fii^k(A_{\iii^-})\alpha_{k+1}(A_{\iii^-})^{s-k} = 4\fii^s(A_{\iii^-})
  \end{equation*}
  for all $\iii \in \CC_r$. Recalling \eqref{eq:star-sum-finite}, we thus have
  \begin{equation} \label{eq:star-box}
  \begin{split}
    \sum_{\iii \in \CC_r} N_\iii(r) &\le 4r^{-s}\sum_{\iii \in \CC_r} \fii^s(A_{\iii^-}) = 4r^{-s}\sum_{\jjj \in J^*} \sum_{\iii \in \CC_r \,:\, \iii^- = \jjj} \fii^s(\A_{\jjj}) \\ 
    &\le 4r^{-s}\sum_{\jjj \in J^*} \# J \fii^s(A_\jjj) \le 4M\# Jr^{-s}.
  \end{split}
  \end{equation}
  Since $\{[\iii] : \iii \in \CC_r\}$ is a covering of $J^\N$, it follows that $\{f_\iii(B) : \iii \in \CC_r\}$ is a covering of $X'$. Hence $X'$ can be covered by $\sum_{\iii \in \CC_r} N_\iii(r)$ many balls of radius $r$. This together with \eqref{eq:star-box} gives $\udimm(X') \le s$. The proof is finished by letting $s \downarrow \dimaff(\A)$.
\end{proof}

It is easy to construct examples of self-affine sets having dimension strictly less than the affinity dimension. For example, several self-affine carpets have this property. Nevertheless, the classical result of Falconer \cite[Theorem 5.3]{Falconer1988} shows that, perhaps rather surprisingly, the Hausdorff dimension of a non-invertible self-affine set equals the affinity dimension for Lebesgue-almost every choice of translation vectors.

\begin{theorem} \label{thm:falconer}
  If $X_{\mathsf{v}}'$ is a planar self-affine set and $\A$ satisfies $\max_{i \in J}\|A_i\|<\frac12$, then
  \begin{equation*}
    \dimh(X_{\mathsf{v}}') = \min\{2,\dimaff(\A)\}
  \end{equation*}
  for $\LL^{2 \#J}$-almost all translation vectors $\mathsf{v} = (v_i)_{i \in J} \in (\R^2)^{\#J}$.
\end{theorem}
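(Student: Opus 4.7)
The upper bound $\dimh(X_{\mathsf{v}}') \le \min\{2, \dimaff(\A)\}$ is immediate from Lemma~\ref{thm:affinity-upper} (giving $\dimh(X_{\mathsf{v}}') \le \udimm(X_{\mathsf{v}}') \le \dimaff(\A)$) together with the trivial bound $\dimh(X_{\mathsf{v}}') \le 2$ coming from $X_{\mathsf{v}}' \subset \R^2$. The substance is the matching lower bound for $\LL^{2\#J}$-a.e.\ $\mathsf{v}$, and I propose to split the argument according to whether $\dimaff(\A)$ exceeds $1$.

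Suppose first that $\dimaff(\A) > 1$. By Lemma~\ref{thm:pressure-continuous} the pressures $P(\A,s)$ and $P(I^\N,\A,s)$ coincide for every $s>1$, forcing $\dimaff((A_i)_{i \in I}) = \dimaff(\A)$. The subsystem $(f_i)_{i \in I}$ consists of invertible contractions with $\max_i\|A_i\|<\tfrac12$ and therefore satisfies the hypotheses of the classical Falconer theorem \cite[Theorem~5.3]{Falconer1988}, producing $\dimh(X_{\mathsf{v}}) = \min\{2,\dimaff((A_i)_{i \in I})\} = \min\{2,\dimaff(\A)\}$ for $\LL^{2\#I}$-a.e.\ $(v_i)_{i \in I}$. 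Since $X_{\mathsf{v}}$ depends only on $(v_i)_{i \in I}$, Fubini promotes this to an $\LL^{2\#J}$-a.e.\ statement in $\mathsf{v}$, and the inclusion $X \subset X_{\mathsf{v}}'$ supplies the lower bound.

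If instead $\dimaff(\A) \le 1$, I mimic the potential-theoretic proof of \cite{Falconer1988} directly on $J^\N$. Fix $0 < s < \dimaff(\A)$; then $P(\A,s)>0$ and $Z_n(s) := \sum_{\iii \in J^n}\|A_\iii\|^s \ge e^{nP(\A,s)}$. Let $\mu_n$ be the level-$n$ probability $\mu_n([\iii]) = \|A_\iii\|^s/Z_n(s)$ and let $\mu$ be a weak-$*$ subsequential limit on $J^\N$; the sub-multiplicativity of $\|\cdot\|^s$ and sub-additivity of $\log Z_n(s)$ combine to give $\mu([\iii]) \le C e^{-|\iii| P(\A,s)}\|A_\iii\|^s$ for every cylinder, so $\mu$ is supported on $\Sigma$. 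Push $\mu$ forward to $\mu_{\mathsf{v}} = (\pi_{\mathsf{v}})_*\mu$ and decompose the $s$-energy according to the longest common prefix $\kkk$; Fubini on a bounded cube $\Omega \subset (\R^2)^{\#J}$ of admissible translations yields
\[
  \int_\Omega E_s(\mu_{\mathsf{v}}) \dd\mathsf{v} \le \sum_{n \ge 0}\sum_{\kkk \in \Sigma_n} \mu([\kkk])^2 \sup_{\iii,\jjj} \int_\Omega |\pi_{\mathsf{v}}(\iii) - \pi_{\mathsf{v}}(\jjj)|^{-s} \dd\mathsf{v},
\]
the supremum over pairs in $J^\N$ with common prefix $\kkk$. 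Factoring $\pi_{\mathsf{v}}(\iii) - \pi_{\mathsf{v}}(\jjj) = A_\kkk \psi_{\iii,\jjj}(\mathsf{v})$ and integrating first in the coordinate $v_{i_{n+1}}$, the Falconer integral estimate $\int_{[-R,R]^2} |Ax+w|^{-s} \dd x \le C/\|A\|^s$, valid for any nonzero $A$ with $\|A\| < \tfrac12$ when $0<s<1$, bounds the inner integral by $C/\|A_\kkk\|^s$. Combining,
\[
  \int_\Omega E_s(\mu_{\mathsf{v}}) \dd\mathsf{v} \le C \sum_{n \ge 0} e^{-2nP(\A,s)} Z_n(s) < \infty,
\]
since $Z_n(s)^{1/n} \to e^{P(\A,s)}$. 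Hence $E_s(\mu_{\mathsf{v}}) < \infty$ for a.e.\ $\mathsf{v}$, Frostman's lemma gives $\dimh(X_{\mathsf{v}}') \ge s$, and letting $s \nearrow \dimaff(\A)$ closes the argument.

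The principal technical obstacle is the inner-integral bound. Although the linearization of $\pi_{\mathsf{v}}(\iii) - \pi_{\mathsf{v}}(\jjj)$ in $v_{i_{n+1}}$ has leading matrix $A_\kkk$, later recurrences of the symbol $i_{n+1}$ in either $\iii$ or $\jjj$ perturb the total derivative by a matrix of the form $A_\kkk E$ with $\|E\| \le 2\sum_{k \ge 1}(\max_i\|A_i\|)^k < 2$. The strict bound $\max_i\|A_i\| < \tfrac12$ is exactly what makes this correction series converge geometrically, so a linear change of variables in $v_{i_{n+1}}$ absorbs it into Falconer's integral lemma up to a uniform constant; this is the only genuinely new point compared to the invertible case, and the simultaneous restriction of the support of $\mu$ to $\Sigma$ (dictated by Lemma~\ref{thm:pressure-continuous}) keeps $\|A_\kkk\|^s$ strictly positive throughout the sum.
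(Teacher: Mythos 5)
The paper itself contains no proof of Theorem~\ref{thm:falconer}: the statement is imported from Falconer \cite{Falconer1988} together with Solomyak's \cite{Solomyak1998} relaxation of the norm bound from $\tfrac13$ to $\tfrac12$, so your attempt has to be measured against those arguments. Your architecture is the right one --- the upper bound from Lemma~\ref{thm:affinity-upper}, the reduction to the invertible subsystem when $\dimaff(\A)>1$ via $P(\A,s)=P(I^\N,\A,s)$ for $s>1$ (Lemma~\ref{thm:pressure-continuous}) followed by Falconer--Solomyak for $(A_i)_{i\in I}$, and a potential-theoretic lower bound run on $\Sigma$ when $\dimaff(\A)\le 1$, where you correctly observe that the integral estimate $\int_B|Tx+a|^{-s}\dd x\le C\alpha_1(T)^{-s}$ survives for rank-one $T$ provided $0<s<1$. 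Two steps, however, do not hold as written.

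The measure. For a weak-$*$ limit of your $\mu_m$, submultiplicativity gives only $\mu([\iii])\le\|A_\iii\|^s\limsup_m Z_{m-p}(s)/Z_m(s)$ with $p=|\iii|$, and there is no general bound $Z_{m-p}/Z_m\le Ce^{-pP(\A,s)}$ with $C$ independent of $p$: that would require $\log Z_n-nP(\A,s)$ to be bounded, i.e.\ almost-additivity of the partition functions, which the paper secures only under domination or irreducibility (Lemma~\ref{thm:weak-gibbs}) --- hypotheses Theorem~\ref{thm:falconer} does not impose. From subadditivity alone one gets at best $Z_m\ge Z_{m-p}\,\alpha_2(A_j)^{ps}$ using an invertible generator, a constant that grows exponentially in $p$ and destroys the convergence of $\sum_n\sum_{\kkk\in\Sigma_n}\mu([\kkk])^2/\|A_\kkk\|^s$. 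This existence of a measure with $\sum_{\kkk}\mu([\kkk])^2/\fii^s(A_\kkk)<\infty$ for $s<\dimaff(\A)$ is the crux of Falconer's lower bound; you need either his original construction or a subadditive equilibrium state as in \cite{Kaenmaki2004} combined with Kingman and Shannon--McMillan--Breiman plus an Egorov truncation, not the assertion that ``sub-multiplicativity and sub-additivity combine to give'' a uniform Gibbs bound.

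The constant $\tfrac12$. Your perturbation satisfies only $\|E\|\le 2\sum_{k\ge1}(\max_i\|A_i\|)^k<2$, and $\|E\|<2$ does not make $I+E$ invertible, so the ``linear change of variables in $v_{i_{n+1}}$'' is unavailable. A Neumann-series inversion needs $\|E\|<1$, i.e.\ $\max_i\|A_i\|<\tfrac13$ --- exactly Falconer's original hypothesis. Reaching $\tfrac12$ is precisely Solomyak's contribution and uses his transversality argument for power series with restricted coefficients; as written, your proof only yields the theorem with $\tfrac13$ in place of $\tfrac12$.
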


Originally, Falconer assumed that the matrices are invertible and their norms are bounded above by $\frac13$. Solomyak \cite{Solomyak1998} relaxed the bound to $\frac12$ which, by the example of Edgar \cite{Edgar1992}, is known to be the best possible. To see that $\min\{2,\dimaff(\A)\}$ in Theorem \ref{thm:falconer} is a lower bound for the Hausdorff dimension also when the matrices are non-invertible, by Lemma \ref{thm:pressure-continuous} it suffices to notice that \cite[Lemma 2.2]{Falconer1988} remains valid for all parameters $s$ strictly less than the rank of the matrix.

Recently a deterministic class of invertible self-affine sets were found for which the Hausdorff dimension equals the affinity dimension. We say that $X$ satisfies the \emph{open set condition} if there exists a non-empty open set $U \subset \R^2$ such that $f_i(U) \cap f_j(U) = \emptyset$ and $f_i(U) \subset U$ for all $i,j \in I$ with $i \ne j$. If such a set $U$ also intersects $X$, then we say that $X$ satisfies the \emph{strong open set condition}. The following breakthrough result for self-affine sets is proven by B\'ar\'any, Hochman, and Rapaport \cite[Theorems 1.1 and 7.1]{BHR}:

\begin{theorem} \label{thm:BHR}
  If $X$ is an invertible strictly affine strongly irreducible planar self-affine set satisfying the strong open set condition, then
  \begin{align*}
    \dimh(X) &= \min\{2,\dimaff(I^\N,\A)\}, \\ 
    \dimh(\proj_V(X)) &= \min\{1,\dimaff(I^\N,\A)\}
  \end{align*}
  for all $V \in \RP$.
\end{theorem}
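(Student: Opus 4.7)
My plan is to attribute this theorem directly to the work of B\'ar\'any, Hochman, and Rapaport in \cite{BHR}, since a genuine proof requires machinery well beyond what can be sketched here; but I can outline the strategy behind their argument. The natural starting point is the K\"aenm\"aki measure $\mu_K \in \MM_\sigma(I^\N)$, i.e.\ an ergodic equilibrium state at the parameter $s_0 = \dimaff(I^\N,\A)$ (as provided by Lemma~\ref{thm:unique-equilibrium-state} and \eqref{eq:equilibrium-state}), together with its push-forward $\mu = \pi_* \mu_K$ on $X$. The goal is to show that $\mu$ is exact-dimensional with local dimension $\min\{2,s_0\}$, so that $\spt\mu \subset X$ and Lemma~\ref{thm:affinity-upper} pin down $\dimh(X)$.

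The next step is a Ledrappier-Young type formula expressing $\dim\mu$ in terms of the Lyapunov exponents $\chi_1 > \chi_2$ of $\mu_K$ (distinct by strict affinity and strong irreducibility, via Guivarc'h-Raugi), the measure-theoretic entropy $h(\mu_K)$, and the dimension of $(\proj_V)_*\mu$ in a $\mu_K$-typical Oseledets direction $V$. Strong irreducibility guarantees that the Furstenberg stationary measure on $\RP$ is non-atomic, while the strong open set condition identifies the symbolic entropy $h(\mu_K)$ with the spatial entropy of $\mu$ on $\R^2$. Consequently, the whole problem reduces to computing $\dimh((\proj_V)_*\mu)$ for every $V \in \RP$.

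The main obstacle — and the principal new contribution of \cite{BHR} — is upgrading the Marstrand almost-everywhere projection bound to an \emph{every $V$} statement: $\dimh(\proj_V \mu) = \min\{1,s_0\}$ for all $V \in \RP$. This is where Hochman's entropy-increase theorem for convolutions on $\R$ enters. Were some exceptional $V_0$ to carry a strict dimension drop, then the set of such exceptional directions would have to be invariant under the semigroup generated by $(A_i)_{i \in I}$, producing a non-trivial atomic part of a stationary measure on $\RP$; but strong irreducibility together with strict affinity forbid such atoms. Substituting this projection statement back into the Ledrappier-Young formula upgrades $\dim\mu$ to $\min\{2,s_0\}$, and the projection claim for $X$ itself follows from $\proj_V(\spt\mu) \subset \proj_V(X)$ and the matching upper bound given by classical Marstrand-type reasoning for the invertible dominated ellipse covers built from $\fii^s(A_\iii)$.
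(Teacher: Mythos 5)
The paper offers no proof of this statement at all: it is quoted verbatim as an external result, citing \cite[Theorems 1.1 and 7.1]{BHR}, and your proposal does exactly the same, correctly attributing the theorem to B\'ar\'any, Hochman, and Rapaport. Your accompanying outline of their strategy (Ledrappier--Young formula for the K\"aenm\"aki measure, non-atomicity of the Furstenberg measure via strong irreducibility, and Hochman's entropy-of-convolutions machinery to handle every $V \in \RP$) is a faithful sketch of that work, so nothing further is required here.
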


We emphasize that Theorem \ref{thm:BHR} uses the assumption that the affine iterated function system consists only of invertible maps. It is currently not known whether the result holds also with non-invertible maps. We also remark that Hochman and Rapaport \cite{HochmanRapaport2021} have recently managed to relax the assumptions of the result. They showed that the strong open set condition can be replaced by exponential separation, a separation condition which allows overlapping.

The following three propositions collect our dimension results for non-invertible self-affine sets.

\begin{proposition} \label{thm:prop1}
  Suppose that $X'$ and $X$ are non-invertible and invertible planar self-affine sets, respectively. If
  \begin{align*}
    \dimh(X) &= \min\{2,\dimaff(I^\N,\A)\} \ge 1, \\ 
    \dimh(\proj_{V}(X)) &= \min\{1,\dimaff(I^\N,\A)\} = 1
  \end{align*}
  for all $V \in \RP$, then $\udimm(X') = \dimh(X)$ and $\dimh(\proj_V(X'))=1$ for all $V \in \RP$.
\end{proposition}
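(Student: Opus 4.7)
The plan is to sandwich $\udimm(X')$ between $\dimh(X)$ from below and an upper bound of the same value derived from the affinity dimension. Since $X \subset X'$ (the Hutchinson operator for $J$ dominates the one for $I$, so iterating it on $X$ yields a set contained in $X'$), one has the elementary chain
\begin{equation*}
\dimh(X) \le \dimh(X') \le \udimm(X').
\end{equation*}
Only the opposite inequality $\udimm(X') \le \dimh(X)$ requires genuine work.

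The central observation is the identity
\begin{equation*}
\dimaff(\A) = \dimaff(I^\N,\A)
\end{equation*}
whenever $\dimaff(I^\N,\A) \ge 1$. I would derive this from Lemma \ref{thm:pressure-continuous}, which yields $P(\A,s) = P(I^\N,\A,s)$ for every $s>1$; intuitively, the rank one and rank zero matrices in $(A_i)_{i\in J\setminus I}$ contribute nothing to $\fii^s$ when $s>1$. The trivial pointwise inequality $P(\A,s) \ge P(I^\N,\A,s)$ gives $\dimaff(\A) \ge \dimaff(I^\N,\A)$ directly from the definition of the affinity dimension as an infimum. For the reverse direction, the hypothesis $\dimaff(I^\N,\A) \ge 1$ together with strict monotonicity of $P(I^\N,\A,\cdot)$ (whenever it is finite) forces $P(I^\N,\A,s) < 0$ for every $s > \dimaff(I^\N,\A)$, and the equality of pressures above $1$ transfers this to $P(\A,s) < 0$ on the same range, whence $\dimaff(\A) \le \dimaff(I^\N,\A)$.

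With the identity in hand, Lemma \ref{thm:affinity-upper} combined with the trivial bound $\udimm(X') \le 2$ (from $X' \subset \R^2$) gives
\begin{equation*}
\udimm(X') \le \min\{2,\dimaff(\A)\} = \min\{2,\dimaff(I^\N,\A)\} = \dimh(X),
\end{equation*}
which closes the sandwich. For the projection statement, the inclusion $\proj_V(X) \subset \proj_V(X') \subset V$ combined with the one-dimensionality of $V \in \RP$ and the hypothesis $\dimh(\proj_V(X)) = 1$ pins both ends at $1$, so $\dimh(\proj_V(X')) = 1$.

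The only step carrying any subtlety is the affinity dimension identity, but it is not a serious obstacle: conceptually, it reflects the fact that the non-invertible maps in $(f_i)_{i \in J\setminus I}$ are of rank at most one and therefore cannot enlarge the affinity dimension past a threshold of one already achieved by the invertible subsystem; technically, all one needs is to chase the definitions through Lemma \ref{thm:pressure-continuous} and the strict monotonicity of the pressure. Everything else is an immediate consequence of the hypotheses and results already established in Section \ref{sec:matrices}.
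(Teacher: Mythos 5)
Your proposal is correct and follows essentially the same route as the paper: both hinge on showing $\dimaff(\A)=\dimaff(I^\N,\A)$ via the identity $P(\A,s)=P(I^\N,\A,s)$ for $s>1$ from Lemma \ref{thm:pressure-continuous}, and then conclude with Lemma \ref{thm:affinity-upper} and the trivial inclusions $X\subset X'$ and $\proj_V(X)\subset\proj_V(X')$. The only cosmetic difference is that you handle the boundary case $\dimaff(I^\N,\A)=1$ uniformly through strict monotonicity of the pressure, where the paper treats it as a separate case.
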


\begin{proof}
  To simplify notation, write $s = \dimaff(I^\N,\A)$. If $1 < s < \infty$, then Lemma \ref{thm:pressure-continuous} shows that $\dimaff(\A) = s \ge 1$. If $s=1$, then we get $P(\A,t) = P(I^\N,\A,t) < 0 = P(I^\N,\A,1) \le P(\A,1)$ for all $1<t<\infty$ and we again have $\dimaff(\A) = s \ge 1$. Therefore, by Lemma \ref{thm:affinity-upper}, we have $\udimm(X') \le \min\{2,\dimaff(\A)\} = \min\{2,s\} = \dimh(X) \le \dimh(X')$. To finish the proof, notice that $1 = \dimh(\proj_{V}(X)) \le \dimh(\proj_{V}(X')) \le 1$ for all $V \in \RP$.
\end{proof}

\begin{proposition} \label{thm:prop3}
  Suppose that $X'$ and $X$ are non-invertible and invertible planar self-affine sets, respectively. If $X'$ is dominated or irreducible, $\A$ contains a rank one matrix, $\dimaff(I^\N,\A) < 1$, and
  \begin{equation*}
    \dimh(X') = \min\{2,\dimaff(\A)\},
  \end{equation*}
  then $\dimh(X') > \udimm(X)$.
\end{proposition}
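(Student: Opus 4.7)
The plan is to combine three ingredients: the affinity upper bound from Lemma~\ref{thm:affinity-upper} applied to the invertible set $X$, the pressure drop of Lemma~\ref{thm:pressure-drop}, and the continuity of the pressure from Lemma~\ref{thm:pressure-continuous}. Since $X$ is itself a planar self-affine set, namely the one associated to $(A_i+v_i)_{i \in I}$, Lemma~\ref{thm:affinity-upper} immediately gives $\udimm(X) \le \dimaff(I^\N,\A) < 1$.

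The heart of the argument is to show $\dimaff(\A) > \dimaff(I^\N,\A)$. Set $t = \dimaff(I^\N,\A)$. When $0 < t < 1$, the uniform continuity of $s \mapsto P(I^\N,\A,s)$ on $(0,\infty) \setminus \{1\}$, obtained by applying Lemma~\ref{thm:pressure-continuous} to the invertible subtuple $(A_i)_{i \in I}$, combined with the defining infimum of $t$, forces $P(I^\N,\A,t) = 0$; in the subcase $t = 0$ one has $\# I = 1$ so directly $P(I^\N,\A,0) = \log \# I = 0$. Since $\A$ is dominated or irreducible, contains a rank one matrix, and satisfies $\max_{i \in J}\|A_i\| < 1$, the hypotheses of Lemma~\ref{thm:pressure-drop} are met and yield $P(\A,t) > P(I^\N,\A,t) = 0$. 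Uniform continuity of $s \mapsto P(\A,s)$ on $(0,\infty) \setminus \{1\}$ then produces some $s > t$ with $P(\A,s) > 0$, whence $\dimaff(\A) > t$ by the definition of the affinity dimension.

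Combining the two estimates with the assumed identity $\dimh(X') = \min\{2,\dimaff(\A)\}$ gives
\begin{equation*}
  \dimh(X') = \min\{2,\dimaff(\A)\} > t = \dimaff(I^\N,\A) \ge \udimm(X),
\end{equation*}
where the strict inequality uses that either $\dimaff(\A) \le 2$, so the minimum equals $\dimaff(\A) > t$, or $\dimaff(\A) > 2$, so the minimum equals $2 > 1 > t$. The one delicate point is the subcase $t = 0$: upgrading $P(\A,0) = \log \# J > 0$ to $\dimaff(\A) > 0$ requires right-continuity of $P(\A,\cdot)$ at $0$, which by Lemma~\ref{thm:pressure-right-continuous-at-zero} is equivalent to the semigroup of $\A$ avoiding rank zero matrices. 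This is automatic in the dominated case, as noted after Lemma~\ref{thm:unique-equilibrium-state}, so the main potential obstacle sits in the purely irreducible case, where one would need to leverage the rank one hypothesis and the pressure drop to exclude the degenerate configuration.
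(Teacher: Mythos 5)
Your argument is the paper's argument: Lemma~\ref{thm:affinity-upper} applied to $X$ gives $\udimm(X)\le\dimaff(I^\N,\A)$, Lemma~\ref{thm:pressure-drop} gives $P(\A,t)>P(I^\N,\A,t)=0$ at $t=\dimaff(I^\N,\A)$, and continuity of the pressure (Lemmas~\ref{thm:pressure-finite1} and~\ref{thm:pressure-continuous}) upgrades this to $\dimaff(\A)>t$, after which the hypothesis $\dimh(X')=\min\{2,\dimaff(\A)\}$ finishes the proof. For $0<t<1$ this is complete and coincides with the paper line for line.

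The one point where you diverge is that you isolate the subcase $t=0$ (equivalently $\#I=1$) and flag that passing from $P(\A,0)>0$ to $\dimaff(\A)>0$ needs right-continuity at $0$, which Lemma~\ref{thm:pressure-continuous} does not supply and which Lemma~\ref{thm:pressure-right-continuous-at-zero} shows can genuinely fail. This is a legitimate observation, and you should be aware that the paper's own proof does not treat this subcase separately either: it simply invokes ``continuity of the pressure'' at $s=\dimaff(I^\N,\A)$, which is only justified by the quoted lemmas when $s>0$. So you have not missed an ingredient that the paper supplies; you have located a point where the written proof is terse. Two further remarks on closing it. First, what is actually needed is weaker than right-continuity at $0$: it suffices that $P(\A,t)>0$ for some $t>0$, which (given finiteness from Lemma~\ref{thm:pressure-finite1}) amounts to $\lim_n\frac1n\log\#\Sigma_n>0$, and this can hold even when the semigroup contains rank zero matrices. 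Second, in the irreducible case with $I=\{j\}$ and a rank one $A_k=vw^\top$, if $\langle w,A_j^mv\rangle\ne 0$ for at least two values of $m\ge 0$ then the words over $\{j,k\}$ with admissible gaps already force exponential growth of $\#\Sigma_n$; if not, one checks that $\ker(A_k)$ is invariant under both $A_j$ and $A_k$, which contradicts irreducibility of the pair but not necessarily of the full tuple, so a complete treatment when $\#J\ge 3$ still requires a little extra work. None of this affects the main case, where your proposal is correct and identical in substance to the paper's proof.
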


\begin{proof}
  To simplify notation, write $s=\dimaff(I^\N,\A)$. Since $s < 1$, Lemma \ref{thm:pressure-drop} implies that $0 = P(I^\N,\A,s) < P(\A,s)$. Therefore, as Lemmas \ref{thm:pressure-finite1} and \ref{thm:pressure-continuous} guarantee the continuity of the pressure, we have $s < \dimaff(\A)$. Therefore, by Lemma \ref{thm:affinity-upper}, we have $\udimm(X) \le s < \min\{2,\dimaff(\A)\} = \dimh(X')$.
\end{proof}

\begin{proposition} \label{thm:prop2}
  Suppose that $X'$ and $X$ are non-invertible and invertible planar self-affine sets, respectively. If $\A$ contains a rank one matrix and
  \begin{equation*}
    \dimh(X) = \dimh(\proj_V(X))  < 1
  \end{equation*}
  for all $V \in \RP$, then there exists a rank one matrix $A$ in $\A$ such that $\dimh(X') = \dimh(\proj_{\ker(A)^\bot}(X')) \le 1$.
\end{proposition}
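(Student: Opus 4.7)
The plan is to leverage the inhomogeneous decomposition from Lemma \ref{thm:inhomog} and reduce the Hausdorff dimension of $X'$ to that of a projection via Lemma \ref{thm:rank-one1}. First, I would write
\[ X' = X \cup \bigcup_{\iii \in I^*} f_\iii(C), \qquad C = \bigcup_{i \in J \setminus I} f_i(X'). \]
Each $f_\iii$ with $\iii \in I^*$ is an invertible affine map and therefore bi-Lipschitz, so $\dimh(f_\iii(C)) = \dimh(C)$. Because $I^*$ is countable and Hausdorff dimension is countably stable, this yields $\dimh(X') = \max\{\dimh(X), \dimh(C)\}$.

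Next, I would unpack $C$, which is a \emph{finite} union, so $\dimh(C) = \max_{i \in J \setminus I} \dimh(f_i(X'))$. For $i \in J \setminus I$ the matrix $A_i$ has rank zero or one. A rank zero map contributes a single point, hence dimension zero; when $A_i$ has rank one, Lemma \ref{thm:rank-one1} shows that $A_i(X')$, and therefore also $f_i(X') = A_i(X')+v_i$, is bi-Lipschitz equivalent to $\proj_{\ker(A_i)^\bot}(X')$. Since $\A$ contains a rank one matrix and invertible matrices have rank two, at least one such rank one matrix actually appears in $\{A_i : i \in J \setminus I\}$; I would pick $A$ among those to maximize $\dimh(\proj_{\ker(A_i)^\bot}(X'))$, obtaining $\dimh(C) = \dimh(\proj_{\ker(A)^\bot}(X'))$.

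To close the argument, I would invoke the hypothesis. Since $X \subset X'$, monotonicity of projections and of Hausdorff dimension together with the assumed equality $\dimh(X) = \dimh(\proj_V(X))$ for every $V \in \RP$ give
\[ \dimh(X) = \dimh(\proj_{\ker(A)^\bot}(X)) \le \dimh(\proj_{\ker(A)^\bot}(X')), \]
so the maximum $\max\{\dimh(X), \dimh(\proj_{\ker(A)^\bot}(X'))\}$ collapses to $\dimh(\proj_{\ker(A)^\bot}(X'))$, giving $\dimh(X') = \dimh(\proj_{\ker(A)^\bot}(X'))$. The upper bound $\dimh(X') \le 1$ is then automatic because $\proj_{\ker(A)^\bot}(X')$ is contained in the one-dimensional subspace $\ker(A)^\bot$.

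I do not anticipate a serious obstacle here: every step is either a direct application of a lemma already proved in the paper or an elementary property of Hausdorff dimension. The only mildly delicate point is confirming that the chosen witness $A$ can be drawn from $\{A_i : i \in J \setminus I\}$, which is automatic since rank two coincides with invertibility, and that the finite maximum genuinely singles out one such matrix.
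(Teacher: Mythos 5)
Your proposal is correct and follows essentially the same route as the paper: decompose $X'$ via Lemma \ref{thm:inhomog}, use countable stability and bi-Lipschitz invariance to reduce to $\max\{\dimh(X),\max_{i\in J\setminus I}\dimh(A_i(X'))\}$, identify a maximizing rank one matrix via Lemma \ref{thm:rank-one1}, and use the hypothesis to collapse the maximum. Your explicit handling of the rank zero case and of why the witness lies in $J\setminus I$ only makes explicit what the paper leaves implicit.
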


\begin{proof}
  To simplify notation, write $s = \dimh(X)$. By Lemma \ref{thm:inhomog}, the non-invertible self-affine set can be expressed as an inhomogeneous self-affine set,
  \begin{equation*}
    X' = X_C = X \cup \bigcup_{\iii \in I^*} f_\iii(C),
  \end{equation*}
  where $C = \bigcup_{i \in J \setminus I} f_i(X')$. Therefore, by the countable stability of Hausdorff dimension,
  \begin{equation} \label{eq:main1}
  \begin{split}
    \dimh(X') &= \max\{s, \sup_{\iii \in I^*}\dimh(f_\iii(C))\} \\ 
    &= \max\{s, \dimh(C)\} = \max\{s, \max_{i \in J \setminus I}\dimh(A_i(X'))\}.
  \end{split}
  \end{equation}
  Let $A$ be a rank one matrix in $\A$ such that $\dimh(A(X')) = \max_{i \in J \setminus I}\dimh(A_i(X'))$. Since, by the assumption and Lemma \ref{thm:rank-one1}, $s = \dimh(\proj_{\ker(A)^\bot}(X)) \le \dimh(\proj_{\ker(A)^\bot}(X')) = \dimh(A(X'))$, the claim follows from \eqref{eq:main1}.
\end{proof}

We are now ready to prove the main result. The proof basically just applies Theorems \ref{thm:falconer} and \ref{thm:BHR} in the above propositions.

\begin{proof}[Proof of Theorem \ref{thm:main}]
  (1) Since, by Theorem \ref{thm:BHR}, we have
  \begin{align*}
    \dimh(X) &= \min\{2,\dimaff(I^\N,\A)\} \ge 1, \\
    \dimh(\proj_{V}(X)) &= \min\{1,\dimaff(I^\N,\A)\} = 1
  \end{align*}
  for all $V \in \RP$, Proposition \ref{thm:prop1} implies $\udimm(X') = \dimh(X)$ and $\dimh(\proj_V(X'))=1$ for all $V \in \RP$.

  (2) Since, by Theorem \ref{thm:falconer}, we have
  \begin{equation*}
    \dimh(X_{\mathsf{v}}') = \min\{2,\dimaff(\A)\}
  \end{equation*}
  for $\LL^{2 \#J}$-almost all $\mathsf{v} \in (\R^2)^{\#J}$, Proposition \ref{thm:prop3} implies $\dimh(X_{\mathsf{v}}') > \udimm(X_{\mathsf{v}})$ for $\LL^{2 \#J}$-almost all $\mathsf{v} \in (\R^2)^{\#J}$.

  (3) Since, by Theorem \ref{thm:BHR}, we have
  \begin{equation*}
    \dimh(X) = \dimh(\proj_V(X)) < 1
  \end{equation*}
  for all $V \in \RP$, Proposition \ref{thm:prop2} implies that there exists a rank one matrix $A$ in $\A$ such that $\dimh(X') = \dimh(\proj_{\ker(A)^\bot}(X')) \le 1$.
\end{proof}

\begin{ack}
  The authors thank De-Jun Feng for pointing out Lemma \ref{thm:affinity-upper}.
\end{ack}


\end{document}